
\documentclass[12pt,a4paper]{amsart}
\usepackage[latin1]{inputenc}
\usepackage{graphicx}
\usepackage{amssymb, amsmath}
\usepackage{geometry}
\usepackage{enumerate}
\usepackage[colorlinks=true,linkcolor=blue,urlcolor=blue,citecolor=blue]{hyperref}
\geometry{a4paper,twoside,top=3cm,bottom=3cm,left=2.8cm,right=2.8cm,headsep=1cm,headheight=3mm}

\newtheorem{theorem}{Theorem}[section]
\newtheorem{definition}[theorem]{Definition}
\newtheorem{lemma}[theorem]{Lemma}

\newtheorem{proposition}[theorem]{Proposition}

\theoremstyle{definition}

\newtheorem{remark}[theorem]{Remark}

\newcommand{\1}{\mathbf{1}}
\newcommand{\N}{\mathbb{N}}

\DeclareFontFamily{U}{wncy}{}
\DeclareFontShape{U}{wncy}{m}{n}{<->wncyr10}{}
\DeclareSymbolFont{mcy}{U}{wncy}{m}{n}
\DeclareMathSymbol{\Sh}{\mathord}{mcy}{"58}

\begin{document}
\title[Geometric properties of infinite graphs]{Geometric properties of infinite graphs and the Hardy-Littlewood maximal operator}
\author{Javier Soria}
\address{Department of  Applied Mathematics and Analysis, University of Barcelona, Gran Via 585, E-08007 Barcelona, Spain.}
\email{soria@ub.edu}

\author{Pedro Tradacete}
\address{Mathematics Department, Universidad Carlos III de Madrid, E-28911 Legan\'es, Ma\-drid, Spain.}
\email{ptradace@math.uc3m.es}

\thanks{The first author has  been partially supported by the Spanish Government grants MTM2013-40985, and the Catalan Autonomous Government grant 2014SGR289. The second author has   been partially supported by the Spanish Government grants MTM2013-40985 and MTM2012-31286, and also Grupo UCM 910346.}

\subjclass[2010]{05C12, 05C63, 42B25} 
\keywords{Infinite graph; maximal operator; weak-type estimate; doubling property.}

\begin{abstract}
We study different geometric properties on infinite graphs, related to the weak-type boundedness of the Hardy-Littlewood maximal averaging operator. In particular, we analyze the connections between the doubling condition, having finite dilation and overlapping indices, uniformly bounded degree, the equidistant comparison property and the weak-type boundedness of the centered Hardy-Littlewood maximal operator. Several non-trivial examples of infinite graphs are given to illustrate the differences among these properties.
\end{abstract}

\maketitle

\thispagestyle{empty}

\section{Introduction}

Let  $G=(V_G,E_G)$ be a simple and connected graph, with  vertices having finite degrees $1\le d_x<\infty$ (conditions that we will always assume from now on), where $V_G$ is the set of vertices and $E_G$ is the set of edges between them. Unless otherwise stated, we  will only  consider the case of infinite graphs. 

For a function $f:V_G\rightarrow \mathbb R$,  the (centered) Hardy-Littlewood maximal operator is defined as
$$
M_G f(x)=\sup_{r\ge 0} \frac{1}{|B(x,r)|}\sum_{y\in B(x,r)} |f(y)|.
$$
Here, $B(x,r)$ denotes the ball of center $x$ and radius $r$ on the graph, equipped with the metric $d_G$ induced by the edges in $E_G$. That is, given $x,y\in V_G$ the distance $d_G(x,y)$ is the number of edges in a shortest path connecting $x$ and $y$, and
$$
B(x,r)=\{y\in V_E:d_G(x,y)\le r\}.
$$
Similarly, we define the sphere
$$
S(x,r)=\{y\in V_E:d_G(x,y)= r\}.
$$
Observe that  $B(x,r)=B(x,[r])$, where $[r]$ denotes the integer part of $r$, $B(x,r)=\{x\}$, if $0\le r<1$ and $B(x,r)=\{x\}\cup N_G(x)$, if $1\le r<2$, where $N_G(x)=S(x,1)$ is the set of neighbors of $x$; i.e., all vertices adjacent to $x$. Also, given a finite set $A\subset V_G$ we denote its cardinality by $|A|$. Thus,  $d_x=|N_G(x)|$. It is clear that, since the vertices' degrees are finite, all balls are finite sets. We refer to \cite{Bol, BoMu} for standard notations and terminology on graphs.

Since the distance $d_G$ introduced above only takes natural numbers as values, the radius $r\ge0$ considered in the definition of the Hardy-Littlewood maximal operator can be taken to be a natural number $r\in\mathbb N=\{0,1,2,\dots\}$:
$$
M_G f(x)=\sup_{r\in \N} \frac{1}{|B(x,r)|}\sum_{y\in B(x,r)} |f(y)|.
$$

The relevance of the Hardy-Littlewood maximal operator is well-known. It is a key tool in the study of averages of functions, and in particular in their limiting behavior, since it yields applications to differentiation theorems, among others. We refer the reader to the book by E. Stein \cite{Stein} as a reference for maximal operators and their use in analysis.

Motivated by the results of A. Naor and T. Tao \cite{NaorTao}, for the case of the infinite $k$-regular tree, as well as our previous work \cite{Sor-Tra_graphs}, for finite graphs, our aim in this paper is to study weak-type $(1,1)$ boundedness for $M_G$ in terms of the geometry of the graph; that is, estimates of the form
\begin{equation}\label{wtp11}
|\left\{ x \in V_G: M_G f(x) > \lambda \right\}|\le 
\frac{C_G}{\lambda} \| f \|_{L_1(G)},
\end{equation}
for all $f \in L_1(G)$ and $\lambda > 0$, where $C_G$ is a constant depending only on $G$. This inequality is usually written as
$$
M_G:L_1(G)\rightarrow L_{1, \infty}(G),
$$
where 
$$
\|f\|_{1,\infty}=\sup_{\lambda>0}\lambda|\left\{ x \in V_G:   |f(x)| > \lambda \right\}|,
$$
and it is denoted as the weak-type (1,1) boundedness of $M_G$. 

The Hardy-Littlewood maximal operator in metric measure spaces has been mainly considered in a doubling setting (cf. \cite{heinonen}). In non-doubling spaces a natural modification of the maximal operator has also been studied (see \cite{NTV,Sawano,Stempak}). There has also been several attempts to analyze discrete versions of the Hardy-Littlewood maximal operator \cite{AK, CH,MSW,SW}.

For our purpose, we will consider in Section~\ref{geomprop} dilation and overlapping indices, $\mathcal{D}_k(G)$ and $\mathcal{O}(G)$, and the \textit{ECP} property, related to estimate \eqref{wtp11}, and show some preliminary results, in particular the boundedness on the Dirac deltas (Proposition~\ref{diracdeltas}). In Section~\ref{allcases}, we will study the different relationships among all these tools, and we will compute the explicit values of these indices for some relevant examples of infinite graphs, allowing us to show concrete counterexamples to the necessity or sufficiency of these properties (we summarize all this in  Table~\ref{table1}). Finally, in Section~\ref{expander},  we follow the ideas used in \cite{NaorTao} to analyze the case of the closely related spherical maximal function (Theorem~\ref{thm:weak11}).

\section{Geometrical properties}\label{geomprop}

In order to study the analogous results of the classical weak-type $(1,1)$ bounds for the Hardy-Littlewood operator on $\mathbb R^n$,  we introduced in \cite{Sor-Tra_graphs} two numbers associated to a graph $G$: the dilation and the overlapping indices. The dilation index of a graph is related to the so called doubling condition, and measures the growth of the number of vertices in a ball when its radius is enlarged in a fixed proportion.

\begin{definition}
Given a graph $G$, for every $k\in\mathbb N$, $k\geq2$, we define the $k$-dilation index as
$$
\mathcal{D}_k(G)=\sup\bigg\{\frac{|B(x,kr)|}{|B(x,r)|}:x\in V_G,\, r\in\mathbb N\bigg\}.
$$
\end{definition}

\begin{remark} Given $k\geq2$, one can easily compute the $k$-dilation index of some relevant finite graphs: For $n\in\mathbb N$, $n\ge2$, let $K_n$ be the complete graph, and $S_n$ the star graph with $n$ vertices (i.e., $S_n$ is a graph with one vertex of degree $n-1$, and $n-1$ leaves, or vertices of degree 1). Then we have
$$
\mathcal{D}_k(K_n)=1 \qquad\text{and}\qquad
\mathcal{D}_k(S_n)=\frac{n}{2}.
$$
Also, for $L_n$, the linear tree with $n$ vertices, we have that $\mathcal D_k(L_n)< k$, for all $n\in \mathbb {N}$, and $\lim_{n\rightarrow\infty}\mathcal{D}_k(L_n)=k$. For the case of infinite graphs, see Section~\ref{allcases}.
\end{remark}

It is easy to see that if $k,k'\in\mathbb N$, with $2\le k<k'$, then
$$
\mathcal D_k(G)\leq\mathcal D_{k'}(G)\leq\mathcal D_{k}(G)^{\big[\frac{\log k'}{\log k}\big]+1}.
$$
In particular, if for a certain graph $G$, there is $k_0\geq2$ such that $\mathcal D_{k_0}(G)<\infty$, then for every $k\geq2$, $\mathcal D_k(G)<\infty$.

The dilation index is very much related to the following property: A metric measure space $(X,d_X,\mu)$ satisfies the doubling condition (cf. \cite{heinonen}) if there is $K>0$ such that, for every $r>0$ and every $x\in X$, we have that 
\begin{equation}\label{doubmu}
\mu(B(x,2r))\leq K\mu(B(x,r)).
\end{equation}
However, the fact that we only need to consider $r\in\mathbb N$ makes the $k$-dilation index more suitable, and less restrictive, as the following result shows. Before, let us recall that the maximum degree of a graph $G$ is defined as 
$$
\Delta_G=\sup\{d_x:x\in V_G\}.
$$

\begin{proposition}\label{deldildou}
Let $G$ be a graph and denote 
$$
K(G)= \sup\bigg\{\frac{|B(x,2r)|}{|B(x,r)|}:x\in V_G,\, r>0\bigg\}.
$$
Then,
$$
\max\big\{\mathcal{D}_2(G),1+\Delta_G\big\}\leq K(G)\le  \mathcal D_2(G)(1+\Delta_G).
$$
In particular, a graph $G$ satisfies the doubling condition \eqref{doubmu} if and only if both $\Delta_G$ and $\mathcal{D}_2(G)$ are finite.
\end{proposition}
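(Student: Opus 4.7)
The plan is to prove the two-sided bound directly, after which the doubling equivalence falls out as an immediate corollary.

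For the lower bounds, the inequality $\mathcal{D}_2(G)\le K(G)$ is immediate, since the supremum defining $K(G)$ ranges over all real $r>0$ and therefore includes the natural numbers appearing in the definition of $\mathcal{D}_2(G)$. To obtain $1+\Delta_G\le K(G)$, I would pick a vertex $x$ with $d_x$ arbitrarily close to $\Delta_G$ and take the half-integer radius $r=1/2$: by the convention $B(x,r)=B(x,[r])$ recorded in the introduction, $B(x,1/2)=\{x\}$ while $B(x,1)=\{x\}\cup N_G(x)$, so the ratio $|B(x,1)|/|B(x,1/2)|$ equals $1+d_x$, and taking the supremum in $x$ gives the claim.

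For the upper bound, I would reduce everything to integer radii via $B(x,r)=B(x,[r])$. The case $0<r<1$ is handled by inspection: one checks separately $0<r<1/2$ (ratio $=1$) and $1/2\le r<1$ (ratio $=1+d_x\le 1+\Delta_G$). For $r\ge 1$, write $r=n+s$ with $n=[r]\ge 1$ and $s\in[0,1)$; a quick case analysis shows $[2r]\in\{2n,2n+1\}$, hence
$$
\frac{|B(x,2r)|}{|B(x,r)|}\le \frac{|B(x,2n+1)|}{|B(x,n)|}.
$$
The key step is then the sphere estimate $|S(x,2n+1)|\le \Delta_G\,|S(x,2n)|$, which follows because every $y\in S(x,2n+1)$ has at least one neighbor in $S(x,2n)$ (the second vertex on a shortest $y\to x$ path), so assigning each such $y$ to one such neighbor yields a map whose fibers have size at most $\Delta_G$. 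This gives $|B(x,2n+1)|\le(1+\Delta_G)|B(x,2n)|$, and combining with $|B(x,2n)|/|B(x,n)|\le \mathcal{D}_2(G)$ closes the estimate.

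The ``in particular'' statement then follows at once: condition \eqref{doubmu} says exactly that $K(G)<\infty$, and the two-sided bound shows this is equivalent to the joint finiteness of $\mathcal{D}_2(G)$ and $\Delta_G$ (using also that $\mathcal{D}_2(G)\ge 1$ so that neither factor on the right vanishes). The only step that is not bookkeeping is handling the ``odd'' integer radius $2n+1$, which is not controlled by $\mathcal{D}_2(G)$ on its own and requires paying the $1+\Delta_G$ factor via the sphere inequality; that is the reason $\Delta_G$ enters the upper bound at all, and it is also what forces it into the lower bound through the half-integer example.
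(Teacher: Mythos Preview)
Your proof is correct and follows essentially the same approach as the paper's: both use the half-integer radius $r=1/2$ for the lower bound $1+\Delta_G\le K(G)$, and for the upper bound both combine the dilation index (for the doubling step) with the degree bound (for the one extra unit of radius that arises because $2r$ need not have the same integer part as $2[r]$). The only cosmetic difference is the order of operations: the paper first passes from $B(x,[r])$ to $B(x,[r]+1)$ via $B(x,[r]+1)=\bigcup_{y\in B(x,[r])}S(y,1)$ and then doubles, whereas you double first to $B(x,2n)$ and then add one via the sphere estimate $|S(x,2n+1)|\le\Delta_G\,|S(x,2n)|$; these are equivalent bookkeeping choices.
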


\begin{proof}
It is clear that $K(G)\ge \mathcal{D}_2(G)$. Also, for any $x\in V_G$ we have that
$$
K(G)\ge\frac{|B(x,1)|}{|B(x,1/2)|}=1+d_x.
$$
Hence, $K(G)\ge 1+\Delta_G$. 

Conversely, for $0<r<1$ we have 
$$
|B(x,2r)|\le|B(x,1)|\leq1+\Delta_G\le\mathcal D_2(G)(1+\Delta_G).
$$ 
While for $r\geq1$ we have 
$$
B(x,[r]+1)= \bigcup_{y\in B(x,[r])}S(y,1).
$$
Hence,  
$$
|B(x,2r)|\le |B(x,2([r]+1)|\le\mathcal D_2(G)|B(x,[r]+1)|\le\mathcal D_2(G)|B(x,r)|\Delta_G.
$$ 
In conclusion, in any case we get that $K(G)\le \mathcal D_2(G)(1+\Delta_G)$.
\end{proof}

\medskip

We will see in Proposition~\ref{onodel}   an example of a graph $G$ for which  $\Delta_G=\infty$   but $\mathcal D_k(G)$ is finite, and Proposition~\ref{trkdo} provides another graph where $\Delta_G$ is finite and $\mathcal D_k(G)=\infty$  (hence, in both cases $K(G)=\infty$). This shows that the conditions on Proposition~\ref{deldildou} are actually independent.

\medskip

We now recall the overlapping index of a graph $G$, which represents the smallest number of balls that necessarily overlap in any covering of $G$.

\begin{definition}
Given a graph $G$, we define its overlapping index as
\begin{align*}
\mathcal{O}(G)=\min\bigg\{m\in\mathbb N: \ & \forall \{B_j\}_{j\in J},\,  B_j\text{ a ball in } G,\,\exists\, I\subset J,\\
&\qquad\qquad  \bigcup_{j\in J}B_j=\bigcup_{i\in I}B_i \text{ and }\sum_{i\in I}\chi_{B_i}\leq m\bigg\}.
\end{align*}
\end{definition}

The overlapping index of the following families of finite graphs can be easily computed:

\begin{align*}
\mathcal{O}(K_n)&=1,\,   n\ge1; \quad
&\mathcal{O}(S_n)&= n-1,\, n\geq2;\\
\mathcal{O}(L_n)&=\left\{
\begin{array}{ccc}
 1, &   & 1\le n\leq2,  \\
 2, &   & n\geq 3;
\end{array}
\right.
\quad
&\mathcal{O}(C_n)&=\left\{
\begin{array}{ccc}
 1, &   & n=3, \\
 2, &   & n\geq 4,
\end{array}
\right.
\end{align*}
where $C_n$ is the cycle with $n$ vertices. Examples for infinite graphs will be given in Section~\ref{allcases}.
\medskip

\begin{remark}
The definition of the overlapping index suggests some connection with dimension theory. Recall that for a metric space $(X,d)$, its asymptotic dimension $\mathrm{asdim}(X)$ is the smallest $n\in\mathbb N$ such that, for every $r>0$, there is $D(r)>0$ and a covering of $X$ with sets of diameter smaller than $D(r)$, such that every ball in $X$ of radius $r$ intersects at most $n+1$ members of the covering (cf. \cite[page 29]{Gromov}). 

Note that if a graph $G$ has finite overlapping index, then for every $r>0$ we can always find a collection of balls of radius $r$, $(B_i)_{i\in I}$ such that 
$$
\sum_{i\in I}\chi_{B_i}\leq \mathcal O(G).
$$ 
Hence, we always have
$$
\mathrm{asdim}(G)\leq \mathcal O(G)-1.
$$
Analogous estimates hold for other notions of dimension considered in the metric setting (such as Assouad-Nagata dimension, cf. \cite{nagata}.) Note however, that for the $k$-regular tree $T_k$ we have $\mathrm{asdim}(T_k)=1$, while $\mathcal{O}(T_k)=\infty$, for $k\geq3$ (see Proposition~\ref{trkdo}).
\end{remark}

The dilation and overlapping indices were used in \cite{Sor-Tra_graphs} to obtain an upper bound for the weak-type $(1,1)$ norm of the maximal operator of a finite graph. The proof also works for infinite graphs, yielding the estimate
\begin{equation}\label{eq:weak11}
\|M_G\|_{1,\infty}\leq\min\big\{\mathcal{D}_3(G),\mathcal{O}(G)\big\}.
\end{equation}

We will introduce next a new property concerning the size of balls on equidistant points, and we will also study its relation with the boundedness of $M_G$.
\begin{definition}
A graph $G$ has the equidistant comparability property (ECP, in short) if there is a constant $C\geq1$ such that, for every $x,y\in V_G$
$$
\frac1C|B(x,d(x,y))| \leq |B(y,d(x,y))| \leq C |B(x,d(x,y))|.
$$
In this case, we define  the ECP constant of $G$ as 
$$
\mathcal C(G)=\sup\bigg\{\frac{|B(x,d(x,y))|}{|B(y,d(x,y))|}:x,y\in V_G\bigg\}.
$$
\end{definition}

\begin{remark}\label{doubECP}
Every vertex-transitive graph $G$ (i.e., for every $x,y\in V_G$ there is an automorphism of $G$ mapping $x$ to $y$) has the \textit{ECP}. In particular, every Cayley graph has the \textit{ECP}. Also if every pair of balls in the graph with equal radius are comparable, then $G$ has the \textit{ECP}.  For instance, the infinite $k$-regular tree $T_k$ satisfies that $|B(x,r)|=|B(y,r)|$, for every $x,y\in V$ and $r\ge0$, and hence the \textit{ECP} holds with $\mathcal C(T_k)=1$ (for more information, see Proposition~\ref{trkdo}). 

Observe also that, since for any pair of vertices $x,y$ in a graph $G$, we have that $B(x,d(x,y))\subset B(y,2d(x,y))$, then  the dilation condition implies the \textit{ECP}. In fact, 
$$
\mathcal C(G)\le \mathcal {D}_2(G).
$$ 
\end{remark}

Other important estimates relating these indices are given in the following result.

\begin{proposition}
Suppose $G$ is a graph with the \textit{ECP}  and $\mathcal O (G)<\infty$. Then, 
$$
\mathcal{D}_2(G)\le1+\mathcal O(G) \mathcal C(G).
$$
\end{proposition}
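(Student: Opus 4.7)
The plan is to split $|B(x,2r)| = |B(x,r)| + |B(x,2r)\setminus B(x,r)|$ and to control the annulus by $\mathcal{O}(G)\mathcal{C}(G)|B(x,r)|$, which immediately yields $\mathcal{D}_2(G)\le 1+\mathcal{O}(G)\mathcal{C}(G)$. The case $r=0$ gives ratio $1$, and for $r\ge 1$ we may assume $S(x,r)\neq\emptyset$; otherwise $B(x,2r)=B(x,r)$ by connectedness and the bound is trivial.

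The heart of the argument is to cover the annulus $A = B(x,2r)\setminus B(x,r)$ by balls of radius $r$ centered on the sphere $S(x,r)$. Indeed, given $y\in A$ one has $r<d(x,y)\le 2r$, and choosing a vertex $z$ on a geodesic from $x$ to $y$ with $d(x,z)=r$ yields $d(z,y)=d(x,y)-r\le r$, so $y\in B(z,r)$. The overlapping property applied to $\{B(z,r):z\in S(x,r)\}$ then provides $I\subset S(x,r)$ with $\bigcup_{z\in I}B(z,r)\supset A$ and $\sum_{z\in I}\chi_{B(z,r)}\le\mathcal{O}(G)$.

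Two bounds now follow. First, since $d(z,x)=r$ for every $z\in I\subset S(x,r)$, the point $x$ belongs to every ball $B(z,r)$ in the subfamily, so evaluating the overlap inequality at $x$ yields $|I|=\sum_{z\in I}\chi_{B(z,r)}(x)\le\mathcal{O}(G)$. Second, the \textit{ECP} applied to the pair $(x,z)$ at distance $d(x,z)=r$ gives $|B(z,r)|=|B(z,d(x,z))|\le\mathcal{C}(G)|B(x,d(x,z))|=\mathcal{C}(G)|B(x,r)|$. Combining these,
\[
|A|\le\Big|\bigcup_{z\in I}B(z,r)\Big|\le\sum_{z\in I}|B(z,r)|\le|I|\,\mathcal{C}(G)|B(x,r)|\le\mathcal{O}(G)\mathcal{C}(G)|B(x,r)|.
\]

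The main obstacle I anticipate is pinning down the correct choice of covering: centering the covering balls precisely on $S(x,r)$ is what makes \textit{ECP} and the overlap cooperate, because the covering radius then coincides with $d(x,z)$ (activating \textit{ECP} with the right comparison $|B(z,r)|\le\mathcal{C}(G)|B(x,r)|$) and, simultaneously, every chosen ball contains the single point $x$, so that the global overlap bound immediately translates into a bound on $|I|$. Covers centered instead on $B(x,r)$ or on the annulus itself fail to yield both bounds at once. Once this covering is spotted, the remaining steps are routine.
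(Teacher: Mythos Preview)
Your proof is correct and follows essentially the same approach as the paper: both apply the overlapping index to the family $\{B(z,r):z\in S(x,r)\}$, bound the size of the selected subfamily by evaluating the overlap inequality at $x$, and control each $|B(z,r)|$ via the \textit{ECP}. The only cosmetic difference is that you isolate the annulus $B(x,2r)\setminus B(x,r)$ while the paper writes $B(x,2r)=B(x,r)\cup\bigcup_{z\in S(x,r)}B(z,r)$ directly, but the estimates are identical.
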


\begin{proof}
Fix $x\in V_G$ and $r\in\mathbb N$. Since $\mathcal O (G)<\infty$, if we consider the collection of balls $\{ B(y,r): \,y\in S(x,r)\}$, we can find a subset $F\subset S(x,r)$ such that
$$
\bigcup_{y\in S(x,r)} B(y,r)=\bigcup_{y\in F} B(y,r),
$$
with the additional property that 
$$
\sum_{y\in F}\chi_{B(y,r)}\leq\mathcal O(G)
$$ 
(observe that since $G$ is an infinite graph, then $S(x,r)\neq\emptyset$).  Note that $x\in B(y,r)$, for every $y\in F\subset S(x,r)$, and  we  have that
$$
|F|=\sum_{y\in F}\chi_{B(y,r)}(x)\leq\mathcal O(G).
$$

Now, since $G$ has the \textit{ECP},  for every $y\in S(x,r)$ we have $|B(y,r)|\leq \mathcal C(G)|B(x,r)|$. Hence, since 
$$
B(x,2r)=\bigcup_{y\in S(x,r)} B(y,r)\cup B(x,r),
$$ 
we get
\begin{align*}
|B(x,2r)|&\le\bigg|\bigcup_{y\in F} B(y,r)\bigg|+|B(x,r)|\leq\sum_{y\in F}|B(y,r)|+|B(x,r)|\\
&\leq\big(1+\mathcal O(G) \mathcal C(G)\big)|B(x,r)|,
\end{align*}
which proves the result.
\end{proof}

\begin{remark}
Arguing in a similar manner one can show that if a graph $G$ has the \textit{ECP} and   $\Delta_G<\infty$, then $G$ satisfies the so-called \textit{local doubling condition}; i.e., for every $r\in\mathbb N$, there exists $D_r<\infty$ such that for every $x\in V_G$, $|B(x,2r)|\leq D_r|B(x,r)|$ (cf. \cite{CMM}). Indeed, since 
$$
B(x,2r)=\bigcup_{y\in S(x,r)} B(y,r)\cup B(x,r),
$$ 
we have
\begin{align*}
|B(x,2r)|&\leq\sum_{y\in S(x,r)}|B(y,r)|+|B(x,r)|\leq |S(x,r)|\mathcal C(G)|B(x,r)|+|B(x,r)|\\
&\leq\big(1+\Delta_G^r\mathcal C(G)\big)|B(x,r)|.
\end{align*}
\end{remark}

The following result shows that \textit{ECP} implies the   weak-type (1,1) boundedness of $M_G$ on the Dirac deltas (this should be compared to \cite[Lemma 2.5]{Sor-Tra_graphs}):

\begin{proposition}\label{diracdeltas}
If $G$ has ECP, then
$$ 
\sup_{x\in V_G}\|M_G\delta_x\|_{1,\infty}\leq \mathcal C(G).
$$
\end{proposition}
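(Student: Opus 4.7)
The plan is to first obtain a closed-form expression for $M_G\delta_x$ and then use the \textit{ECP} to convert the resulting super-level sets into sets whose size is controlled by balls centred at $x$ itself.

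First I would compute $M_G\delta_x(y)$ explicitly for arbitrary $y\in V_G$. Since $\delta_x$ vanishes off $\{x\}$, the average $\frac{1}{|B(y,r)|}\sum_{z\in B(y,r)}\delta_x(z)$ equals $\frac{1}{|B(y,r)|}$ when $r\geq d(x,y)$ (i.e., when $x\in B(y,r)$) and is $0$ otherwise. Since $r\mapsto|B(y,r)|$ is non-decreasing, the supremum defining the maximal function is attained at the smallest admissible radius $r=d(x,y)$, yielding
$$
M_G\delta_x(y) = \frac{1}{|B(y,d(x,y))|}.
$$

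Next I would fix $\lambda>0$ and study the level set $E_\lambda=\{y\in V_G:M_G\delta_x(y)>\lambda\}=\{y:|B(y,d(x,y))|<1/\lambda\}$. Applying the \textit{ECP} at the common radius $d(x,y)$ gives $|B(x,d(x,y))|\leq \mathcal C(G)|B(y,d(x,y))|<\mathcal C(G)/\lambda$ for every $y\in E_\lambda$. Using that $r\mapsto|B(x,r)|$ is non-decreasing, the set of radii $r\in\mathbb N$ satisfying $|B(x,r)|<\mathcal C(G)/\lambda$ is either empty (in which case $E_\lambda=\emptyset$) or an initial segment $\{0,1,\dots,R\}$. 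Since $y\in B(x,d(x,y))$, we conclude $E_\lambda\subset B(x,R)$, and therefore
$$
|E_\lambda|\leq |B(x,R)|<\frac{\mathcal C(G)}{\lambda}.
$$
Multiplying by $\lambda$ and taking the supremum over $\lambda>0$ gives $\|M_G\delta_x\|_{1,\infty}\leq \mathcal C(G)=\mathcal C(G)\|\delta_x\|_{L_1(G)}$, and since $x$ is arbitrary, the desired estimate follows.

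There is no substantial obstacle in this argument; the entire proof hinges on the initial identification $M_G\delta_x(y)=1/|B(y,d(x,y))|$, after which the \textit{ECP} exchanges the roles of $x$ and $y$ and reduces the weak-type bound to a trivial monotonicity statement about the volumes of the balls $B(x,r)$. The one point that requires a small verification is the boundary case $\lambda\geq \mathcal C(G)$, where $E_\lambda$ is simply empty because $|B(y,d(x,y))|\geq 1$.
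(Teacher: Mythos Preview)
Your proof is correct and follows essentially the same approach as the paper's: compute $M_G\delta_x(y)=1/|B(y,d(x,y))|$, apply the \textit{ECP} to replace $|B(y,d(x,y))|$ by $|B(x,d(x,y))|$ at the cost of $\mathcal C(G)$, and then observe that the resulting condition confines $y$ to a ball $B(x,R)$ of cardinality less than $\mathcal C(G)/\lambda$. The paper's version is slightly terser (it fixes $0<\lambda<1$ from the outset and names $R$ as $r_x$), but the argument is identical.
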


\begin{proof}
For any $x\in V_G$ and $0<\lambda<1$, if we set 
$$
r_x=\max\Big\{d(x,y): |B(x,d(x,y))|<\frac{\mathcal C(G)}{\lambda}\Big\},
$$ 
then we have that $B(x,r_x)=\left\{y\in V_G:|B(x,d(x,y))|<\frac{\mathcal C(G)}{\lambda}\right\}$ and 
\begin{align*}
\left|\left\{y\in V_G:|M_G\delta_x (y)|>\lambda\right\}\right|&=\left|\left\{y\in V_G:|B(y,d(x,y))|<\frac1\lambda\right\}\right|\\
&\leq \left|\left\{y\in V_G:|B(x,d(x,y))|<\frac{\mathcal C(G)}{\lambda}\right\}\right|\\
&=|B(x,r_x)| <\frac {\mathcal C(G)}\lambda.
\end{align*}
Therefore,
$$
\|M_G\delta_x\|_{1,\infty}=\sup_{\lambda>0}\lambda\left|\left\{y\in V_G:|M_G\delta_x (y)|>\lambda\right\}\right|\leq \mathcal C(G).
$$
\end{proof}

\section{Examples of graphs}\label{allcases}
The purpose of this section is to study the relations between the geometric conditions introduced in Section~\ref{geomprop} and the weak-type estimates for $M_G$, illustrating, by means of several examples, the differences between boundedness of $\|M_G\|_{1,\infty}$, \textit{ECP}, dilation and overlapping conditions, and the maximum degree.

\medskip

\subsection{The direct sum of complete graphs: $\oplus K_n$}
Let 
$$
V_{\oplus K_n}=\{(m,n)\in \mathbb N^2: m\geq2,\,1\leq n\leq m-1\}.
$$
We set two vertices $(m_1,n_1)$, $(m_2,n_2)$ in $V_{\oplus K_n}$ to be adjacent as follows
$$
(m_1,n_1)\sim_{\oplus K_n} (m_2,n_2) \Leftrightarrow \left\{
                                          \begin{array}{c}
                                            m_1=m_2+1,\, n_1=1;\, \mathrm{or},\\
                                            m_2=m_1+1,\, n_2=1;\, \mathrm{or},\\
                                            m_1=m_2.
                                          \end{array}
                                        \right.
$$

\begin{center}
\begin{figure}[hb]
\ifx\JPicScale\undefined\def\JPicScale{0.8}\fi
\unitlength \JPicScale mm
\begin{picture}(120,45)(0,0)

\linethickness{0.15mm}
\put(20,30){\line(1,0){10}}
\put(20,30){\circle*{1.25}}

\linethickness{0.15mm}
\multiput(30,30)(0.12,0.12){83}{\line(1,0){0.12}}
\put(30,30){\circle*{1.25}}

\linethickness{0.15mm}
\multiput(40,40)(0.12,-0.12){83}{\line(1,0){0.12}}
\put(40,40){\circle*{1.25}}

\linethickness{0.15mm}
\put(30,30){\line(1,0){20}}

\linethickness{0.15mm}
\multiput(50,30)(0.12,0.12){83}{\line(1,0){0.12}}
\put(50,30){\circle*{1.25}}

\linethickness{0.15mm}
\multiput(60,40)(0.12,-0.12){83}{\line(1,0){0.12}}
\put(60,40){\circle*{1.25}}

\linethickness{0.15mm}
\multiput(60,20)(0.12,0.12){83}{\line(1,0){0.12}}
\put(60,20){\circle*{1.25}}

\linethickness{0.15mm}
\multiput(50,30)(0.12,-0.12){83}{\line(1,0){0.12}}
\put(50,30){\circle*{1.25}}

\linethickness{0.15mm}
\put(50,30){\line(1,0){20}}


\linethickness{0.15mm}
\put(60,20){\line(0,1){20}}

\linethickness{0.15mm}
\put(70,30){\line(1,0){20}}
\put(70,30){\circle*{1.25}}

\linethickness{0.15mm}
\multiput(70,30)(0.12,-0.25){42}{\line(0,-1){0.36}}

\linethickness{0.15mm}
\put(75,20){\line(1,0){10}}
\put(75,20){\circle*{1.25}}

\linethickness{0.15mm}
\multiput(85,20)(0.12,0.25){42}{\line(0,1){0.36}}
\put(85,20){\circle*{1.25}}

\linethickness{0.15mm}
\multiput(80,40)(0.12,-0.12){83}{\line(0,-1){0.18}}
\put(80,40){\circle*{1.25}}

\linethickness{0.15mm}
\multiput(70,30)(0.12,0.12){83}{\line(0,1){0.18}}%

\linethickness{0.15mm}
\multiput(75,20)(0.12,0.49){42}{\line(0,1){0.71}}

\linethickness{0.15mm}
\multiput(80,40)(0.12,-0.49){42}{\line(0,-1){0.71}}

\linethickness{0.15mm}
\multiput(70,30)(0.12,-0.08){125}{\line(1,0){0.12}}

\linethickness{0.15mm}
\multiput(75,20)(0.12,0.08){125}{\line(1,0){0.12}}

\linethickness{0.15mm}
\multiput(90,30)(0.12,0.12){83}{\line(1,0){0.12}}
\put(90,30){\circle*{1.25}}

\linethickness{0.15mm}
\put(100,40){\line(1,0){10}}
\put(100,40){\circle*{1.25}}

\linethickness{0.15mm}
\multiput(110,40)(0.12,-0.12){83}{\line(1,0){0.12}}
\put(110,40){\circle*{1.25}}

\linethickness{0.15mm}
\multiput(110,20)(0.12,0.12){83}{\line(1,0){0.12}}
\put(110,20){\circle*{1.25}}

\linethickness{0.15mm}
\put(100,20){\line(1,0){10}}
\put(100,20){\circle*{1.25}}

\linethickness{0.15mm}
\multiput(90,30)(0.12,-0.12){83}{\line(1,0){0.12}}


\linethickness{0.15mm}
\put(90,30){\line(1,0){30}}

\linethickness{0.15mm}
\put(100,20){\line(0,1){20}}

\linethickness{0.15mm}
\multiput(100,40)(0.12,-0.24){83}{\line(0,-1){0.24}}

\linethickness{0.15mm}
\multiput(100,40)(0.24,-0.12){83}{\line(1,0){0.24}}

\linethickness{0.15mm}
\multiput(90,30)(0.24,0.12){83}{\line(1,0){0.24}}

\linethickness{0.15mm}
\multiput(100,20)(0.12,0.24){83}{\line(0,1){0.24}}

\linethickness{0.15mm}
\put(110,20){\line(0,1){20}}

\linethickness{0.15mm}
\multiput(100,20)(0.24,0.12){83}{\line(1,0){0.24}}

\linethickness{0.15mm}
\multiput(90,30)(0.24,-0.12){83}{\line(1,0){0.24}}
\put(120,30){\circle*{1.25}}

\put(125,30){\circle*{0.25}}
\put(128,30){\circle*{0.25}}
\put(131,30){\circle*{0.25}}
\end{picture} \\[-1.5cm]
\caption{The graph $\oplus K_n$.}
\end{figure}
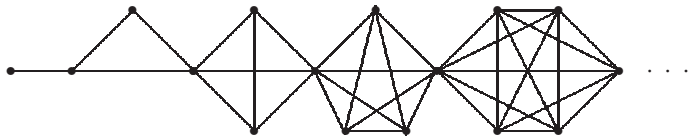
\end{center}

\bigbreak

\begin{proposition}\label{onodel} The following properties hold for the graph $\oplus K_n$:
\begin{enumerate}[\rm (i)]
\item $\Delta_{\oplus K_n}=\infty$.

\item
$\mathcal O(\oplus K_n)=2$.

\item $\mathcal D_2(\oplus K_n)\le 48$. 

\item$\mathcal C(\oplus K_n)\le 48$. 

\item $M_{\oplus K_n}:L_1\rightarrow L_{1,\infty}$ is bounded, with norm not exceeding $2$.
\end{enumerate}
\end{proposition}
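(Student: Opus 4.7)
My plan is to dispatch (i), (iv), (v) via general machinery already in hand, leaving (ii) and (iii) as the real content. Statement (i) is immediate: the vertex $(m,1)$ lies in the level-$m$ clique $K_{m-1}$, hence has degree at least $m-2$, so $\Delta_{\oplus K_n}=\infty$. Granted (ii), statement (v) is obtained from the displayed bound $\|M_G\|_{1,\infty}\leq\min\{\mathcal D_3(G),\mathcal O(G)\}$ in \eqref{eq:weak11}. Granted (iii), statement (iv) is an immediate consequence of the estimate $\mathcal C(G)\leq\mathcal D_2(G)$ from Remark~\ref{doubECP}.

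For (ii) the approach is to exploit the ``chain of cliques'' macrostructure of $\oplus K_n$: writing $L_m=\{(m,j):1\le j\le m-1\}$, the graph consists of the cliques $L_m\cong K_{m-1}$ linearly glued via their ``$1$''-vertices. A direct inspection shows that every ball $B(x,r)$ consists of a block of consecutive levels (possibly with a partial outermost level), so balls are canonically encoded by intervals of level-indices. The bound $\mathcal O(\oplus K_n)\le 2$ then reduces to the classical interval-covering principle: given any cover by balls, a greedy procedure keeping only balls reaching furthest left or right at each step produces a subcover with pointwise multiplicity at most $2$. The matching lower bound $\geq 2$ follows from the embedded bi-infinite path formed by the $(m,1)$ vertices, where the overlap $2$ is already forced.

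For (iii), the main computational step, I would compute $|B((m,n),r)|$ explicitly. Since $|L_k|=k-1$ and the ball contains roughly the levels from $\max\{2,m-r\}$ through $m+r$ (with minor corrections when $n\neq 1$, since reaching the next clique requires first hopping to the ``$1$''-vertex), one has
$$
|B((m,n),r)|\asymp\sum_{k=\max\{2,m-r\}}^{m+r}(k-1),
$$
a quadratic quantity in the extremal indices. The ratio $|B(x,2r)|/|B(x,r)|$ is then estimated by splitting into two regimes: if $r\lesssim m$ both numerator and denominator are near-local quadratic windows of comparable size, whereas if $r\gtrsim m$ both reduce to essentially triangular numbers and can be compared directly. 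Tracking the worst-case constant across these cases yields $\mathcal D_2(\oplus K_n)\le 48$.

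The main obstacle is exactly this case analysis in (iii). The two types of centers (``$1$''-vertex versus a generic vertex in a clique) propagate distance to adjacent levels slightly differently, and one must separately handle the boundary regime $m-r\leq 2$ where the ball saturates at the bottom of the graph. Once this bookkeeping is completed the numerical constant $48$ falls out with room to spare, and (iv) and (v) follow as indicated above.
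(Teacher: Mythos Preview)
Your proposal is correct and follows essentially the same route as the paper: reduce (iv) and (v) to (iii) and (ii) via Remark~\ref{doubECP} and \eqref{eq:weak11}, treat (ii) by the interval-covering principle for the chain-of-cliques structure, and handle (iii) by estimating $|B(x,r)|$ via the arithmetic sum of clique sizes and splitting into the regimes $r\lesssim m$ and $r\gtrsim m$.

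One cosmetic point worth noting: your level decomposition $L_m=\{(m,j):1\le j\le m-1\}$ is slightly less convenient than the paper's. The paper instead uses the \emph{overlapping} maximal cliques $K_j$ (of size $j$, sharing the cut vertex $v_j=(j+1,1)$ with $K_{j+1}$), and with this choice every ball is an \emph{exact} union $\bigcup_{l=a}^{b}K_l$ of consecutive blocks, with no partial outermost level at all. This eliminates the bookkeeping you anticipate and makes the argument for (ii) a one-liner: given three balls with a common point, the one with smallest left endpoint and the one with largest right endpoint already cover the third. Your version with disjoint $L_m$'s and a dangling cut vertex at the right end would also go through, but the paper's parametrization is cleaner. (Minor slip: the path of cut vertices $(m,1)$ is one-sided, not bi-infinite, though this is irrelevant for the lower bound $\mathcal O\ge 2$.)
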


\begin{proof}
It is clear that for $m\geq 2$, $d_{(m,1)}=(m-1)+(m-2)=2m-3$, and hence $\Delta_{\oplus K_n}=\infty$. 

To describe the balls of this graph, we are going to think of $\oplus K_n$ as the union of all complete graphs with a common cut vertex $v_j$ between $K_j$ and $K_{j+1}$, $j=1,2,\dots$ (we will write $v_j=(j+1,1)\in K_j\cap K_{j+1}$). All the other vertices $v$ in $K_j$ will be referred as interior vertices ($v\in \mathring{K_j}$). Now, it is easy to see that, if $r=1,2,\dots$, and we define $[m]_1=\max\{m,1\}$, then
\begin{equation}\label{ballcompl}
B(v,r)=\begin{cases}
  \displaystyle\bigcup_{l=[j-r]_1}^{j+r}  K_l, & \text{ if  }  v\in K_j\cap K_{j+1}, \\[.8cm]
\displaystyle   \bigcup_{l=[j-r+1]_1}^{j+r-1}  K_l,   & \text{ if } v\in \mathring{K_j}.
\end{cases}
\end{equation}
Thus, if we consider any 3 balls $B_1$, $B_2,$ and $B_3$ with common intersection, and  $B_p$ is the one with the largest index $l_{\rm max}$ and $B_q$ is the one with the smallest $l_{\rm min}$, for the complete graphs $K_l$ appearing in \eqref{ballcompl}, then
$$
B_1\cup B_2\cup B_3=B_p\cup B_q.
$$
Therefore, by a simple inductive argument, we deduce that $\mathcal O(\oplus K_n)=2$, which is (ii). Using \eqref{ballcompl} again, it is a straightforward calculation to show that, if $v\in B_j$, then we have
$$
|B(v,r)|\in\begin{cases}
\displaystyle \Big[\frac{jr}2,6jr\Big],& \text{ if  }  j\ge r\ge 1, \\[.3cm]
\displaystyle \Big[\frac{r^2}2,6r^2\Big], & \text{ if } 1\le j<r.
\end{cases}
$$ 
Thus, 
$$
\frac{|B(v,2r)|}{|B(v,r)|}\le 48,
$$
which proves (iii).  Remark~\ref{doubECP} and (iii) prove also (iv). 

Finally, the boundedness in (v) follows from \eqref{eq:weak11} and (ii). This finishes the proof.
\end{proof}

\subsection{The upwards shift direct sum of complete graphs: $L_\infty\oplus K_n$}
Let us consider the following variation of  $\oplus K_n$:
$$
V_{L_\infty\oplus K_n}=\{(m,n)\in \mathbb N^2: m\geq2,\,0\leq n\leq m\},
$$
with adjacent vertices as follows
$$
(m_1,n_1)\sim_{L_\infty\oplus K_n} (m_2,n_2) \Leftrightarrow \left\{
                                          \begin{array}{l}
                                            |m_1-m_2|=1,\, n_1=n_2=0;\,\mathrm{or},\\
                                            m_1=m_2,\, n_1=0,\,n_2=1;\,\mathrm{or},\\
                                            m_1=m_2,\, n_2=0,\,n_1=1;\,\mathrm{or},\\                                         
                                            m_1=m_2,\, n_1,n_2\geq1.
                                          \end{array}
                                        \right.
$$

\begin{center}
\begin{figure}[h]
\ifx\JPicScale\undefined\def\JPicScale{1}\fi
\unitlength \JPicScale mm
\begin{picture}(100,60)(0,0)

\linethickness{0.15mm}
\put(10,20){\line(1,0){90}}
\put(100,20){\vector(1,0){0.12}}
\put(10,20){\circle*{1.25}}
\put(10,30){\circle*{1.25}}
\put(10,40){\circle*{1.25}}

\linethickness{0.15mm}
\put(10,20){\line(0,1){10}}

\linethickness{0.15mm}
\put(10,30){\line(0,1){10}}

\linethickness{0.15mm}
\put(30,20){\line(0,1){10}}
\put(30,20){\circle*{1.25}}
\put(30,30){\circle*{1.25}}
\put(25,40){\circle*{1.25}}
\put(35,40){\circle*{1.25}}

\linethickness{0.15mm}
\multiput(25,40)(0.12,-0.24){42}{\line(0,-1){0.24}}

\linethickness{0.15mm}
\put(25,40){\line(1,0){10}}

\linethickness{0.15mm}
\multiput(30,30)(0.12,0.24){42}{\line(0,1){0.24}}

\linethickness{0.15mm}
\put(50,20){\line(0,1){10}}
\put(50,20){\line(0,1){10}}
\put(50,20){\circle*{1.25}}
\put(50,30){\circle*{1.25}}
\put(50,50){\circle*{1.25}}
\put(45,40){\circle*{1.25}}
\put(55,40){\circle*{1.25}}

\linethickness{0.15mm}
\multiput(45,40)(0.12,-0.24){42}{\line(0,-1){0.24}}

\linethickness{0.15mm}
\multiput(45,40)(0.12,0.24){42}{\line(0,1){0.24}}

\linethickness{0.15mm}
\multiput(50,50)(0.12,-0.24){42}{\line(0,-1){0.24}}

\linethickness{0.15mm}
\multiput(50,30)(0.12,0.24){42}{\line(0,1){0.24}}

\linethickness{0.15mm}
\put(50,30){\line(0,1){20}}

\linethickness{0.15mm}
\put(45,40){\line(1,0){10}}

\linethickness{0.15mm}
\put(70,20){\line(0,1){10}}
\put(70,20){\circle*{1.25}}
\put(70,30){\circle*{1.25}}
\put(65,50){\circle*{1.25}}
\put(75,50){\circle*{1.25}}
\put(60,40){\circle*{1.25}}
\put(80,40){\circle*{1.25}}

\linethickness{0.15mm}
\multiput(60,40)(0.12,-0.12){83}{\line(1,0){0.12}}

\linethickness{0.15mm}
\multiput(60,40)(0.12,0.24){42}{\line(0,1){0.24}}

\linethickness{0.15mm}
\put(65,50){\line(1,0){10}}

\linethickness{0.15mm}
\multiput(75,50)(0.12,-0.24){42}{\line(0,-1){0.24}}

\linethickness{0.15mm}
\multiput(70,30)(0.12,0.12){83}{\line(1,0){0.12}}

\linethickness{0.15mm}
\put(90,20){\line(0,1){10}}
\put(90,20){\circle*{1.25}}
\put(90,30){\circle*{1.25}}
\put(85,50){\circle*{1.25}}
\put(95,50){\circle*{1.25}}
\put(85,40){\circle*{1.25}}
\put(95,40){\circle*{1.25}}
\put(90,60){\circle*{1.25}}

\linethickness{0.15mm}
\multiput(85,40)(0.12,-0.24){42}{\line(0,-1){0.24}}

\linethickness{0.15mm}
\put(85,40){\line(0,1){10}}

\linethickness{0.15mm}
\multiput(85,50)(0.12,0.24){42}{\line(0,1){0.24}}

\linethickness{0.15mm}
\multiput(90,60)(0.12,-0.24){42}{\line(0,-1){0.24}}

\linethickness{0.15mm}
\put(95,40){\line(0,1){10}}

\linethickness{0.15mm}
\multiput(90,30)(0.12,0.24){42}{\line(0,1){0.24}}

\linethickness{0.15mm}
\put(90,30){\line(0,1){30}}

\linethickness{0.15mm}
\multiput(90,60)(0.12,-0.48){42}{\line(0,-1){0.48}}

\linethickness{0.15mm}
\multiput(85,40)(0.12,0.48){42}{\line(0,1){0.48}}

\linethickness{0.15mm}
\put(85,40){\line(1,0){10}}

\linethickness{0.15mm}
\multiput(85,40)(0.12,0.12){83}{\line(1,0){0.12}}

\linethickness{0.15mm}
\multiput(85,50)(0.12,-0.12){83}{\line(1,0){0.12}}

\linethickness{0.15mm}
\put(85,50){\line(1,0){10}}

\linethickness{0.15mm}
\multiput(85,50)(0.12,-0.48){42}{\line(0,-1){0.48}}

\linethickness{0.15mm}
\multiput(90,30)(0.12,0.48){42}{\line(0,1){0.48}}

\linethickness{0.15mm}
\multiput(65,50)(0.12,-0.48){42}{\line(0,-1){0.48}}

\linethickness{0.15mm}
\multiput(65,50)(0.18,-0.12){83}{\line(1,0){0.18}}

\linethickness{0.15mm}
\put(60,40){\line(1,0){20}}

\linethickness{0.15mm}
\multiput(60,40)(0.18,0.12){83}{\line(1,0){0.18}}

\linethickness{0.15mm}
\multiput(70,30)(0.12,0.48){42}{\line(0,1){0.48}}

\put(105,20){\circle*{0.25}}
\put(108,20){\circle*{0.25}}
\put(111,20){\circle*{0.25}}

\end{picture} \\[-1.5cm]
\caption{The graph $L_\infty\oplus K_n$.}
\end{figure}
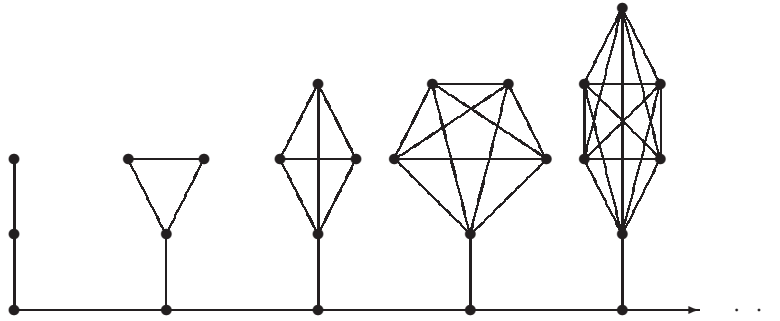
\end{center}

\begin{proposition}\label{odclk} The following properties hold for the graph $L_\infty\oplus K_n$:
\begin{enumerate}[\rm (i)]
\item $\Delta_{L_\infty\oplus K_n}=\infty$.

\item $\mathcal O(L_\infty\oplus  K_n)=5$.

\item $\mathcal D_2(L_\infty\oplus  K_n)=\infty$. 

\item$\mathcal C(L_\infty\oplus  K_n)=\infty$. 

\item $M_{L_\infty\oplus  K_n}:L_1\rightarrow L_{1,\infty}$ is bounded, with norm not exceeding $5$.
\end{enumerate}
\end{proposition}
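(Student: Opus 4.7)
Parts (i), (iii), (iv) are direct computations, and (v) is immediate from (ii) via \eqref{eq:weak11}. The substantive content is (ii), the identity $\mathcal O(L_\infty\oplus K_n)=5$.

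For (i), the vertex $(m,1)$ is adjacent to $(m,0)$ together with $(m,2),\dots,(m,m)$, so $d_{(m,1)}=m\to\infty$. For (iii), at $x=(m,0)$ one has $|B(x,1)|=4$ (the three neighbors $(m-1,0),(m+1,0),(m,1)$ together with $x$ itself) while $B(x,2)$ contains all $m+1$ vertices of column $m$ (each $(m,k)$ with $k\geq 0$ is within distance $2$ of $x$ via $(m,1)$), giving $|B(x,2)|/|B(x,1)|\geq m/4\to\infty$. For (iv), take $x=(m,0)$ and $y=(m,1)$, so that $d(x,y)=1$; then $|B(x,1)|=4$ and $|B(y,1)|=m+1$, hence $\mathcal C\geq(m+1)/4\to\infty$. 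Part (v) then follows from \eqref{eq:weak11} applied with $\mathcal O(L_\infty\oplus K_n)=5$.

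For the lower bound in (ii), I would fix some large $m_0$ and consider the family $\mathcal F=\{B_k:k\in\mathbb Z,\,m_0+k\geq 2\}$ with $B_k=B((m_0+k,0),2)$. Each $B_k$ covers column $m_0+k$ entirely through the intermediate vertex $(m_0+k,1)$, so $\bigcup\mathcal F=V_{L_\infty\oplus K_n}$. The vertex $(m_0+k,m_0+k)$ lies at distance exactly $|k-k'|+2$ from $(m_0+k',0)$, hence it belongs only to $B_k$; thus every $B_k$ is indispensable to any subcover. Since $(m_0,0)\in B_k$ iff $|k|\leq 2$, the multiplicity at $(m_0,0)$ of the only possible subcover equals $5$, proving $\mathcal O\geq 5$.

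For the upper bound $\mathcal O\leq 5$, given an arbitrary cover $\{B_j\}_{j\in J}$ of $A=\bigcup_j B_j$, the plan is to extract a subcover organized by columns. For each column $m$ with $A\cap(\{m\}\times\{1,\dots,m\})\neq\emptyset$, select from the family a ball $F_m$ whose intersection with column $m$ realises the prescribed non-spine data, choosing (among such balls) one with the smallest spine support; then adjoin extra balls to cover any remaining spine vertices of $A$, using that the spine is isometric to the one-dimensional tree with $\mathcal O(L_\infty)=2$. The main obstacle is to verify that at no vertex the total multiplicity exceeds $5$: at a spine vertex $(m,0)$ the column balls $F_{m'}$ containing $(m,0)$ correspond, after the size optimisation, to columns $m'$ within a band around $m$ whose contribution is at most three, while the additional spine-stage balls contribute at most two, and off-spine vertices are covered with even smaller multiplicity. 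The delicate bookkeeping here mirrors the extremal structure appearing in the lower-bound family and is where I expect the principal difficulty; once settled, matching the lower bound yields $\mathcal O=5$.
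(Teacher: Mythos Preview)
Your arguments for (i), (iii), (iv) and (v) are correct and essentially coincide with the paper's. Your lower bound in (ii) is also correct; the paper uses instead the five concrete balls
\[
B\big((m-2,0),2\big),\ B\big((m-1,1),2\big),\ B\big((m,1),1\big),\ B\big((m+1,1),2\big),\ B\big((m+2,0),2\big),
\]
all containing $(m,0)$, whose union cannot be achieved with only four of them. Either construction works.

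There is, however, a genuine gap in your upper bound $\mathcal O\leq 5$. Your plan assigns to every column $m$ (with off-spine content in $A$) a ball $F_m$ of minimal spine support and then asserts that at each spine vertex at most three of the $F_{m'}$ overlap. This is not justified, and as stated it fails. Consider the cover $\{B((m,1),R):m\geq 2\}$ for any fixed $R\geq 4$. Each ball covers the off-spine part of its own column (and of neighbouring ones), and all of them have identical spine-support length $2R-1$, so your minimisation criterion is vacuous. With the admissible choice $F_m=B((m,1),R)$ one gets $(m_0,0)\in F_m$ for every $m$ with $|m-m_0|\leq R-1$, i.e.\ multiplicity $2R-1>5$ from column balls alone. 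A salvageable version of your idea would require a sparse selection (one ball per \emph{block} of columns rather than one per column) together with an argument that off-spine coverage survives; none of this is provided, and it is exactly the ``delicate bookkeeping'' you flagged.

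The paper's route to $\mathcal O\leq 5$ is entirely different and local rather than constructive. It proves the following claim: whenever six balls $B_1,\dots,B_6$ have a common point, one of them is redundant in their union. After disposing of radii $\leq 1$ by inspection, and of coinciding center-columns (which force one ball to contain the other), the six centers have pairwise distinct column-coordinates; by pigeonhole at least three of them, say with $m_1<m_2<m_3$, lie on one side of the common point's column $m_0$. Writing $l_i=\min\{m:(m,0)\in B_i\}$ and $u_i=\max\{m:(m,0)\in B_i\}$, a short trichotomy ($l_2<l_1$, or $u_2>u_3$, or neither) forces $B_1\subset B_2$, or $B_3\subset B_2$, or $B_2\subset B_1\cup B_3$. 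Iterating the claim yields $\mathcal O\leq 5$.
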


\begin{proof}
Since $d_{(m,1)}=m$ we clearly have $\Delta_{L_\infty\oplus  K_n}=\infty$. 

That $\mathcal O(L_\infty\oplus  K_n)\geq5$ follows from the fact that for $m\geq4$, $(m,0)$ belongs to the intersection of the balls
$$
B\big((m-2,0),2\big)\cap B\big((m-1,1),2\big)\cap B\big((m,1),1\big)\cap B\big((m+1,1),2\big)\cap B\big((m+2,0),2\big),
$$
while the union of these five balls cannot be covered by only four of them. For the converse, let us consider the following:
\medskip

\noindent
{\sl Claim:} Suppose $(B_i)_{i=1}^6$ is a collection of balls in $L_\infty\oplus  K_n$, with $\bigcap_{i=1}^6 B_i\neq\emptyset$. Then, for some $i_0\in\{1,\ldots,6\}$, we have
$$
\bigcup_{i\in\{1,\ldots,6\}} B_i=\bigcup_{i\in\{1,\ldots,6\}\backslash\{i_0\}} B_i.
$$
Once this is proved, an inductive application of this argument shows that, in fact,  $\mathcal O(L_\infty\oplus  K_n)\leq5$, proving (ii).

In order to prove the claim, for $1\leq i\leq6$, let $(m_i,n_i)$ and $r_i$ denote respectively the center and radius of $B_i$. If $r_i\leq 1$, for some $i=1,\ldots,6$, it is easy to see, by simple inspection, that for some $i_0\in\{1,\ldots,6\}$ we have
$$
\bigcup_{i\in\{1,\ldots,6\}} B_i=\bigcup_{i\in\{1,\ldots,6\}\backslash\{i_0\}} B_i.
$$
Hence, without loss of generality, we can assume that $r_i\geq 2$, for each $i=1,\ldots,6$. Observe that if for some $1\leq i,j\leq6$, it holds that $m_i=m_j$, then necessarily $B_i\subset B_j$ or $B_j\subset B_i$. Hence, we can suppose that $m_0<m_1<m_2<m_3$ where $(m_0,n_0)\in\bigcap_{i=1}^6 B_i$. For each $i=1,2,3$, let us denote 
\begin{align*}
l_i=\min\{m\geq2: (m,0)\in B_i\}\text{ and }
u_i=\max\{m\geq2: (m,0)\in B_i\}.
\end{align*}
Now, if $l_2<l_1$, then $B_1\subset B_2$. Similarly, if $u_2>u_3$, then $B_3\subset B_1$. Otherwise, we have $l_1\leq l_2$ and $u_2\leq u_3$, and in this case we get $B_2\subset B_1\cup B_3$. Therefore, the claim is proved.

In order to see (iii), note that, for $m\geq3$, we have that $|B\big((m,0),1\big)|=4$, while $|B\big((m,0),2\big)|=m+7$. Hence, $\mathcal{D}_2(L_\infty\oplus K_n)=\infty$. 

As for (iv), for $m\geq3$, we find that $d\big((m,0),(m,1)\big)=1$ and $|B\big((m,0),1\big)|=4$, while $|B\big((m,1),1\big)|=m+1$.  Hence, $\mathcal{C} (L_\infty\oplus K_n)=\infty$. 

Finally, the boundedness in (v) follows from \eqref{eq:weak11} and (ii). This finishes the proof.
\end{proof}

\subsection{The $k$-regular tree: $T_k$} We consider now the infinite regular tree of degree $k$ (for $k\geq2$). The main non-trivial  result regarding this graph is the weak-type (1,1) boundedness of $M_{T_k}$, with a norm estimate independent of $k$, \cite[Theorem~1.5]{NaorTao} (see also \cite{CoMeSe,RochTab}). In the following proposition we complete the information about the remaining properties.

\begin{center}
\begin{figure}[h]
\unitlength .7mm
\begin{picture}(72.00,49.00)(0,0)

\linethickness{0.15mm}
\put(36.00,25.00){\circle*{2.00}}

\linethickness{0.15mm}
\put(46.00,35.00){\circle*{2.00}}

\linethickness{0.15mm}
\put(46.00,15.00){\circle*{2.00}}

\linethickness{0.15mm}
\put(26.00,25.00){\circle*{2.00}}

\linethickness{0.15mm}
\put(46.00,45.00){\circle*{2.00}}

\linethickness{0.15mm}
\put(56.00,35.00){\circle*{2.00}}

\linethickness{0.15mm}
\put(66.00,25.00){\circle*{2.00}}

\linethickness{0.15mm}

\linethickness{0.15mm}
\put(66.00,45.00){\circle*{2.00}}

\linethickness{0.15mm}
\put(26.00,25.00){\line(1,0){10.00}}

\linethickness{0.15mm}
\multiput(36.00,25.00)(0.12,0.12){83}{\line(1,0){0.12}}

\linethickness{0.15mm}
\put(46.00,35.00){\line(0,1){10.00}}

\linethickness{0.15mm}
\put(46.00,35.00){\line(1,0){10.00}}

\linethickness{0.15mm}
\multiput(56.00,35.00)(0.12,0.12){83}{\line(1,0){0.12}}

\linethickness{0.15mm}
\multiput(56.00,35.00)(0.12,-0.12){83}{\line(1,0){0.12}}

\linethickness{0.15mm}
\multiput(36.00,25.00)(0.12,-0.12){83}{\line(1,0){0.12}}

\linethickness{0.15mm}
\put(16.00,35.00){\circle*{2.00}}

\linethickness{0.15mm}
\put(16.00,15.00){\circle*{2.00}}

\linethickness{0.15mm}
\put(16.00,45.00){\circle*{2.00}}

\linethickness{0.15mm}
\put(6.00,35.00){\circle*{2.00}}

\linethickness{0.15mm}
\put(6.00,15.00){\circle*{2.00}}

\linethickness{0.15mm}
\put(16.00,5.00){\circle*{2.00}}

\linethickness{0.15mm}
\put(46.00,5.00){\circle*{2.00}}

\linethickness{0.15mm}
\put(56.00,15.00){\circle*{2.00}}

\linethickness{0.15mm}
\put(46.00,15.00){\line(1,0){10.00}}

\linethickness{0.15mm}
\put(46.00,5.00){\line(0,1){10.00}}

\linethickness{0.15mm}
\put(16.00,35.00){\line(0,1){9.00}}

\linethickness{0.15mm}
\multiput(16.00,35.00)(0.12,-0.12){83}{\line(1,0){0.12}}

\linethickness{0.15mm}
\multiput(16.00,15.00)(0.12,0.12){83}{\line(1,0){0.12}}

\linethickness{0.15mm}
\put(6.00,15.00){\line(1,0){10.00}}

\linethickness{0.15mm}
\put(16.00,5.00){\line(0,1){10.00}}

\linethickness{0.15mm}
\put(6.00,35.00){\line(1,0){1.00}}

\linethickness{0.15mm}
\put(6.00,35.00){\line(1,0){10.00}}

\linethickness{0.15mm}
\multiput(42.00,0.00)(1.14,1.43){4}{\multiput(0,0)(0.11,0.14){5}{\line(0,1){0.14}}}

\linethickness{0.15mm}
\multiput(46.00,5.00)(1.43,-1.43){4}{\multiput(0,0)(0.12,-0.12){6}{\line(1,0){0.12}}}

\linethickness{0.15mm}
\multiput(56.00,15.00)(1.60,1.60){3}{\multiput(0,0)(0.11,0.11){7}{\line(1,0){0.11}}}

\linethickness{0.15mm}
\multiput(56.00,15.00)(1.60,-1.20){3}{\multiput(0,0)(0.16,-0.12){5}{\line(1,0){0.16}}}

\linethickness{0.15mm}
\multiput(66.00,25.00)(1.71,0){4}{\line(1,0){0.86}}

\linethickness{0.15mm}
\multiput(66.00,20.00)(0,2.00){3}{\line(0,1){1.00}}

\linethickness{0.15mm}
\multiput(66.00,45.00)(1.71,0){4}{\line(1,0){0.86}}

\linethickness{0.15mm}
\multiput(66.00,45.00)(0,1.60){3}{\line(0,1){0.80}}

\linethickness{0.15mm}
\multiput(46.00,45.00)(2.00,1.20){3}{\multiput(0,0)(0.20,0.12){5}{\line(1,0){0.20}}}

\linethickness{0.15mm}
\multiput(42.00,48.00)(1.60,-1.20){3}{\multiput(0,0)(0.16,-0.12){5}{\line(1,0){0.16}}}

\linethickness{0.15mm}
\multiput(16.00,45.00)(1.60,1.20){3}{\multiput(0,0)(0.16,0.12){5}{\line(1,0){0.16}}}

\linethickness{0.15mm}
\multiput(13.00,48.00)(1.20,-1.20){3}{\multiput(0,0)(0.12,-0.12){5}{\line(1,0){0.12}}}

\linethickness{0.15mm}
\multiput(0.00,39.00)(1.71,-1.14){4}{\multiput(0,0)(0.17,-0.11){5}{\line(1,0){0.17}}}

\linethickness{0.15mm}
\multiput(1.00,31.00)(1.43,1.14){4}{\multiput(0,0)(0.14,0.11){5}{\line(1,0){0.14}}}

\linethickness{0.15mm}
\multiput(0.00,18.00)(1.71,-0.86){4}{\multiput(0,0)(0.21,-0.11){4}{\line(1,0){0.21}}}

\linethickness{0.15mm}
\multiput(1.00,10.00)(1.43,1.43){4}{\multiput(0,0)(0.12,0.12){6}{\line(1,0){0.12}}}

\linethickness{0.15mm}
\multiput(13.00,0.00)(1.20,2.00){3}{\multiput(0,0)(0.12,0.20){5}{\line(0,1){0.20}}}

\linethickness{0.15mm}
\multiput(16.00,5.00)(1.20,-2.00){3}{\multiput(0,0)(0.12,-0.20){5}{\line(0,-1){0.20}}}

\end{picture} \\
\caption{The graph $T_k$, with $k=3$.}
\end{figure}
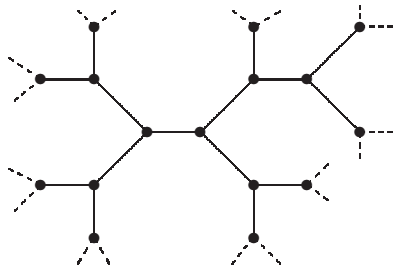
\end{center}

\begin{proposition}\label{trkdo}The following properties hold for the graph $T_k$:
\begin{enumerate}[\rm (i)]
\item $\Delta_{T_k}=k$.

\item $\mathcal O(T_k)=\infty$, for $k\geq3$.

\item $\mathcal D_2(T_k)=\infty$. 

\item$\mathcal C(T_k)=1$. 

\item \cite[Theorem~1.5]{NaorTao} $M_{T_k}:L_1\rightarrow L_{1,\infty}$ is bounded, with norm uniformly on $k$.
\end{enumerate}
\end{proposition}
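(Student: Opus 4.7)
My plan is to verify the five items in turn; (i), (iii), and (iv) are essentially direct computations, (v) is the deep result of Naor--Tao that we simply cite, and (ii) requires the main genuinely combinatorial argument. For (i) there is nothing to do: $T_k$ is defined to be $k$-regular, so every vertex has degree $k$ and $\Delta_{T_k}=k$.

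For (iii), I would fix a vertex $o$ and use the recursion that each vertex at distance $j\geq 1$ from $o$ has exactly $k-1$ neighbors at distance $j+1$. This gives $|S(o,j)|=k(k-1)^{j-1}$ for $j\geq 1$, whence (for $k\geq 3$)
\[
|B(o,r)|=1+k\cdot\frac{(k-1)^r-1}{k-2},
\]
so $|B(o,2r)|/|B(o,r)|\sim (k-1)^r\to\infty$. For (iv), I would invoke the obvious vertex-transitivity of $T_k$ (for any two vertices an explicit isometry swaps the rooted subtrees): balls of equal radius have equal size regardless of the center, so $\mathcal C(T_k)=1$ in view of Remark~\ref{doubECP}.

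The substantive case is (ii). The plan is to produce, for every $N\in\N$, a family of balls sharing a common point from which no ball can be removed without changing the union, and on which the indicator sum at the common point exceeds $N$. Fix a root $o$ and a radius $r\geq 1$ and consider
\[
\mathcal F_r=\{B(y,r):y\in S(o,r)\}.
\]
Every ball in $\mathcal F_r$ contains $o$, and $|\mathcal F_r|=|S(o,r)|=k(k-1)^{r-1}$. For each $y\in S(o,r)$ I would pick a ``witness'' vertex $z_y$ at distance $r$ from $y$ along the unique geodesic ray out of $y$ that does not return toward $o$, so that $d(o,z_y)=2r$. For any other $y'\in S(o,r)$, writing $a=\mathrm{lca}(y,y')$ and noting $d(o,a)\leq r-1$, the tree-distance identity gives
\[
d(y',z_y)=d(y',a)+d(a,y)+d(y,z_y)=2\bigl(r-d(o,a)\bigr)+r\geq r+2,
\]
so $z_y\notin B(y',r)$. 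Hence every ball in $\mathcal F_r$ is indispensable in any subfamily covering $\bigcup\mathcal F_r$, and the overlap at $o$ of such a subfamily equals $k(k-1)^{r-1}$, which for $k\geq 3$ is unbounded in $r$. Thus $\mathcal O(T_k)=\infty$. Finally, (v) is exactly \cite[Theorem~1.5]{NaorTao}, and is invoked without proof.

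The main obstacle is (ii): the key is to locate, for each ball in the family, a genuinely ``private'' vertex lying in no other ball; the lowest-common-ancestor distance identity for trees is the clean tool that makes this transparent, turning the exponential growth of the sphere $S(o,r)$ directly into an unbounded overlap at $o$.
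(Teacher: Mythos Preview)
Your proposal is correct and follows essentially the same approach as the paper's proof: (i) and (v) are handled identically, (iii) and (iv) are the same direct ball-size computations (your formula $1+k\frac{(k-1)^r-1}{k-2}$ is in fact the correct one for the $k$-regular tree), and for (ii) both you and the paper exhibit the family $\{B(y,r):y\in S(o,r)\}$ and show each ball contains a private witness at distance $2r$ from the root---your LCA distance computation simply spells out what the paper leaves as a one-line remark.
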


\begin{proof}
Condition (i) is trivial and (v) is \cite[Theorem~1.5]{NaorTao}.  A simple calculation shows that $|B(x,r)|=(k^{r+1}-1)/(k-1)$, which proves (iv). Similarly,
$$
\frac{|B(x,2r)|}{|B(x,r)|}=\frac{k^{2r+1}-1}{k^{r+1}-1}\longrightarrow\infty,\quad\text{as }r\to\infty.
$$
Hence, $\mathcal D_2(T_k)=\infty$. 

To finish, fix a vertex $x\in T_k$ and consider the set of points $y\in S(x,r)$, the sphere of radius $r=1,2,\dots$ Then, $x$ belongs to all balls in the family  $\mathcal B_r=\big\{B(y,r)\big\}_{\{y\in S(x,r)\}}$, and for each $y\in S(x,r)$ there exists a point $x_y\in B(y,r)$ which is in no other ball of $\mathcal B_r$ (it suffices to consider $x_y$ such that $d(x_y,x)=2r$ and $y$ belongs to the unique path joining $x$ and $x_y$). Since for $k\geq3$, $|\mathcal B_r|\rightarrow\infty$, as $r\to\infty$, then $\mathcal O(T_k)=\infty$.
\end{proof}

\subsection{The infinite comb: $\Sh_\infty$}

Let $V_{\Sh_\infty}=\{(j,k):j\in\mathbb Z, k\in\mathbb N\}$. Given two vertices $(j_1,k_1)$, $(j_2,k_2)$ in $V_{\Sh_\infty}$ we define them to be adjacent according to the following
$$
(j_1,k_1)\sim_{\Sh_\infty} (j_2,k_2) \Leftrightarrow \left\{
                                          \begin{array}{c}
                                            |j_1-j_2|=1,\,\textrm{and}\, k_1=k_2=0; \textrm{ or,} \\
                                            j_1=j_2,\,\textrm{and}\, |k_1-k_2|=1.
                                          \end{array}
                                        \right.
$$

\begin{center}
\begin{figure}[h]
\ifx\JPicScale\undefined\def\JPicScale{0.8}\fi
\unitlength \JPicScale mm
\begin{picture}(145,55)(0,20)

\linethickness{0.15mm}
\put(20,30){\line(1,0){110}}
\put(130,30){\vector(1,0){0.12}}
\put(20,30){\vector(-1,0){0.12}}
\put(15,30){\circle*{0.25}}
\put(12,30){\circle*{0.25}}
\put(9,30){\circle*{0.25}}
\put(135,30){\circle*{0.25}}
\put(138,30){\circle*{0.25}}
\put(141,30){\circle*{0.25}}

\linethickness{0.15mm}
\put(40,30){\line(0,1){30}}
\put(40,60){\vector(0,1){0.12}}
\put(40,30){\circle*{1.25}}
\put(40,40){\circle*{1.25}}
\put(40,50){\circle*{1.25}}
\put(40,65){\circle*{0.25}}
\put(40,68){\circle*{0.25}}
\put(40,71){\circle*{0.25}}

\linethickness{0.15mm}
\put(60,30){\line(0,1){30}}
\put(60,60){\vector(0,1){0.12}}
\put(60,30){\circle*{1.25}}
\put(60,40){\circle*{1.25}}
\put(60,50){\circle*{1.25}}
\put(60,65){\circle*{0.25}}
\put(60,68){\circle*{0.25}}
\put(60,71){\circle*{0.25}}

\linethickness{0.15mm}
\put(80,30){\line(0,1){30}}
\put(80,60){\vector(0,1){0.12}}
\put(80,30){\circle*{1.25}}
\put(80,40){\circle*{1.25}}
\put(80,50){\circle*{1.25}}
\put(80,65){\circle*{0.25}}
\put(80,68){\circle*{0.25}}
\put(80,71){\circle*{0.25}}

\linethickness{0.15mm}
\put(100,30){\line(0,1){30}}
\put(100,60){\vector(0,1){0.12}}
\put(100,30){\circle*{1.25}}
\put(100,40){\circle*{1.25}}
\put(100,50){\circle*{1.25}}
\put(100,65){\circle*{0.25}}
\put(100,68){\circle*{0.25}}
\put(100,71){\circle*{0.25}}

\linethickness{0.15mm}
\put(120,30){\line(0,1){30}}
\put(120,60){\vector(0,1){0.12}}
\put(120,30){\circle*{1.25}}
\put(120,40){\circle*{1.25}}
\put(120,50){\circle*{1.25}}
\put(120,65){\circle*{0.25}}
\put(120,68){\circle*{0.25}}
\put(120,71){\circle*{0.25}}

\linethickness{0.15mm}
\put(50,30){\line(0,1){30}}
\put(50,60){\vector(0,1){0.12}}
\put(50,30){\circle*{1.25}}
\put(50,40){\circle*{1.25}}
\put(50,50){\circle*{1.25}}
\put(50,65){\circle*{0.25}}
\put(50,68){\circle*{0.25}}
\put(50,71){\circle*{0.25}}

\linethickness{0.15mm}
\put(70,30){\line(0,1){30}}
\put(70,60){\vector(0,1){0.12}}
\put(70,30){\circle*{1.25}}
\put(70,40){\circle*{1.25}}
\put(70,50){\circle*{1.25}}
\put(70,65){\circle*{0.25}}
\put(70,68){\circle*{0.25}}
\put(70,71){\circle*{0.25}}

\linethickness{0.15mm}
\put(90,30){\line(0,1){30}}
\put(90,60){\vector(0,1){0.12}}
\put(90,30){\circle*{1.25}}
\put(90,40){\circle*{1.25}}
\put(90,50){\circle*{1.25}}
\put(90,65){\circle*{0.25}}
\put(90,68){\circle*{0.25}}
\put(90,71){\circle*{0.25}}

\linethickness{0.15mm}
\put(110,30){\line(0,1){30}}
\put(110,60){\vector(0,1){0.12}}
\put(110,30){\circle*{1.25}}
\put(110,40){\circle*{1.25}}
\put(110,50){\circle*{1.25}}
\put(110,65){\circle*{0.25}}
\put(110,68){\circle*{0.25}}
\put(110,71){\circle*{0.25}}

\linethickness{0.15mm}
\put(30,30){\line(0,1){30}}
\put(30,60){\vector(0,1){0.12}}
\put(30,30){\circle*{1.25}}
\put(30,40){\circle*{1.25}}
\put(30,50){\circle*{1.25}}
\put(30,65){\circle*{0.25}}
\put(30,68){\circle*{0.25}}
\put(30,71){\circle*{0.25}}



































































\end{picture} \\[-.5cm]
\caption{The infinite comb graph $\Sh_\infty$.}
\end{figure}
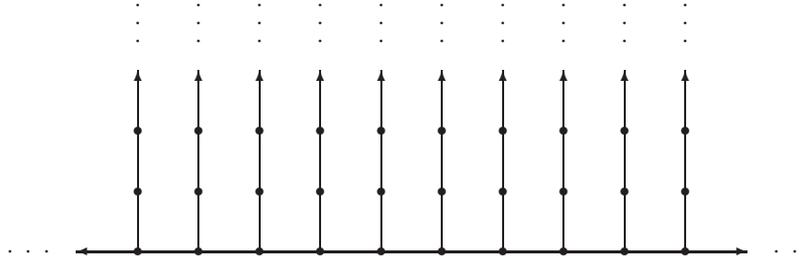
\end{center}

\begin{proposition}\label{peineks}The following properties hold for the graph $\Sh_\infty$:
\begin{enumerate}[\rm (i)]
\item $\Delta_{\Sh_\infty}=3$.

\item $\mathcal O(\Sh_\infty)=\infty$.

\item $\mathcal D_2(\Sh_\infty)=\infty$. 

\item$\mathcal C(\Sh_\infty)=\infty$. 

\item $M_{\Sh_\infty}:L_1\nrightarrow L_{1,\infty}$ is not bounded (even on Dirac deltas).
\end{enumerate}
\end{proposition}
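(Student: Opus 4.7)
For (i), one reads off the degrees directly from the adjacency definition: $(j,0)$ has neighbors $(j\pm1,0)$ and $(j,1)$, while $(j,k)$ with $k\geq 1$ has only $(j,k\pm1)$. Hence $\Delta_{\Sh_\infty}=3$. For (ii), the plan is to exhibit, for every $n\geq 1$, a family of balls whose common intersection forces overlap at least $n$. Concretely, I would take
\[
\mathcal F_n=\{B((j,0),2n):1\leq j\leq n\}.
\]
The two facts to verify are: every ball of $\mathcal F_n$ contains $(0,0)$, because $d((j,0),(0,0))=j\leq n\leq 2n$; and the ``apex'' $(j,2n)$ of the $j$-th tooth lies in $B((j,0),2n)$ (distance exactly $2n$) but at distance $|j-j'|+2n>2n$ from the center of every other ball of $\mathcal F_n$. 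This ``private witness'' at each apex means that no ball can be removed from $\mathcal F_n$ without shrinking the union, so any subfamily $I$ realising the same union must equal $\mathcal F_n$ itself, forcing $\sum_{i\in I}\chi_{B_i}(0,0)=n$. Letting $n\to\infty$ yields $\mathcal O(\Sh_\infty)=\infty$.

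For (iii), (iv), (v) the plan is to first record the elementary ball formula that comes from the observation that distinct teeth can only be joined through the spine: for any $y=(j,k)$ and $r\geq 0$, a vertex $(j,k')$ lies in $B(y,r)$ iff $|k-k'|\leq r$ (and $k'\geq 0$), while $(j',k')$ with $j'\neq j$ lies in $B(y,r)$ iff $k+|j-j'|+k'\leq r$. From this, three simple counting arguments produce the ``diamond'' cardinality $|B((0,0),m)|=(m+1)^2$, the pure tooth-segment $|B((0,m),m)|=2m+1$, and $|B((0,m),2m)|=m^2+4m+1$. Item (iii) is then the ratio $|B((0,m),2m)|/|B((0,m),m)|\to\infty$, and (iv) follows from $d((0,0),(0,m))=m$ together with $|B((0,0),m)|/|B((0,m),m)|=(m+1)^2/(2m+1)\to\infty$.

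For (v), test the weak-type $(1,1)$ inequality against $f=\delta_{(0,0)}$, which has $\|f\|_1=1$. Since only one vertex supports $f$, and $|B(y,r)|$ is non-decreasing in $r$, the supremum in the definition of $M_G\delta_{(0,0)}(y)$ is attained at the smallest $r$ containing $(0,0)$, namely $r=d(y,(0,0))=|j|+k$. Applying the ball formula to this particular radius gives, after grouping the tooth-$j$ contribution with the remaining diamond on the opposite side,
\[
|B((j,k),|j|+k)|=(|j|+1)^2+2k,\qquad\text{so}\qquad M_G\delta_{(0,0)}(j,k)=\frac{1}{(|j|+1)^2+2k}.
\]
Setting $N=1/\lambda$, the superlevel set $\{M_G\delta_{(0,0)}>\lambda\}=\{(j,k):(|j|+1)^2+2k<N\}$ can be estimated by summing, for each $|j|\lesssim\sqrt N$, the admissible range of $k$, which is of order $N-(|j|+1)^2$; the resulting double sum is of order $N^{3/2}$. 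Therefore $\lambda\,|\{M_G\delta_{(0,0)}>\lambda\}|\gtrsim N^{1/2}\to\infty$ as $\lambda\to 0^+$, so $\|M_G\delta_{(0,0)}\|_{1,\infty}=\infty$ while $\|\delta_{(0,0)}\|_1=1$, ruling out the weak-type bound even on a single Dirac delta.

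The main obstacle is the combinatorial choice in (ii): one needs a family that simultaneously shares one ``focal'' point by many balls and equips each ball with a private witness preventing its removal. Once this witness family is in place, and once the tooth/spine ball formula has been recorded, everything else—the polynomial cardinalities in (iii), (iv), and the $N^{3/2}$-growth of the distribution function in (v)—is a mechanical count.
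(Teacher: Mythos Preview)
Your proof is correct. The computation for (v) is essentially identical to the paper's: both evaluate $M_{\Sh_\infty}\delta_{(0,0)}(j,k)=\big((|j|+1)^2+2k\big)^{-1}$ and then count the superlevel set to obtain the $\lambda^{-3/2}$ growth.

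The genuine difference lies in (ii), (iii), (iv). You prove each of these directly: for (ii) you build the explicit witness family $\mathcal F_n=\{B((j,0),2n):1\le j\le n\}$ with the private apex points $(j,2n)$; for (iii) and (iv) you compute the relevant ball cardinalities and take the ratios. The paper instead deduces all three from (v) via the general machinery developed earlier: since \eqref{eq:weak11} gives $\|M_G\|_{1,\infty}\le\min\{\mathcal D_3(G),\mathcal O(G)\}$, unboundedness in (v) forces $\mathcal D_2(\Sh_\infty)=\mathcal O(\Sh_\infty)=\infty$; and since Proposition~\ref{diracdeltas} gives $\sup_x\|M_G\delta_x\|_{1,\infty}\le\mathcal C(G)$, failure of (v) on a Dirac delta forces $\mathcal C(\Sh_\infty)=\infty$. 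The paper's route is more economical and illustrates how the earlier results feed back into the examples, while your direct arguments are self-contained and make the geometry of $\Sh_\infty$ more transparent (in particular, your construction for (ii) is a nice concrete instance of the ``private witness'' obstruction).
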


\begin{proof}
It is clear that the degree of a vertex $(j,k)$ is
$$
d_{(j,k)}=\left\{
            \begin{array}{cc}
              3, & \textrm{if}\, k=0, \\
              2, & \textrm{otherwise.} \\
            \end{array}
          \right.
$$ This proves (i). It is easy to check that for $(j,k)\in V_{\Sh_\infty}$ and $r\in\mathbb N$ we have
$$
|B((j,k),r)|=
\left\{
\begin{array}{lc}
 2r+1, & \textrm{ if }r<k,   \\
 (r-k+1)^2+2k, & \textrm{ if }r\geq k.      
\end{array}
\right.
$$
Let $o=(0,0)$. For any $(j,k)\in V_{\Sh_\infty}$ we have that $d(o,(j,k))=|j|+k$. In particular,
$$
|B((j,k),|j|+k)|=(|j|+1)^2+2k.
$$
Now, given $\lambda\in(0,1)$ we have
\begin{align*}
|\{(j,k):|M_{\Sh_\infty}\delta_o(j,k)|>\lambda\}|&=\left|\left\{(j,k):(|j|+1)^2+2k<\frac1\lambda\right\}\right|\\
&=\sum_{j,k}\chi_{\{(|j|+1)^2+2k<\frac1\lambda\}}(j,k)=\sum_{\substack{k\in\mathbb N\\2k<\frac1\lambda-1}}\,\,\,\sum_{\substack{j\in\mathbb Z\\(|j|+1)^2<\frac1\lambda-2k}}1\\
&\geq\sum_{k=0}^{\frac1{2\lambda}-2}\left|\left\{j\in\mathbb Z:(|j|+1)^2<\frac1\lambda-2k\right\}\right|\\
&\geq\sum_{k=0}^{\frac1{2\lambda}-2}2\left(\sqrt{\frac1\lambda-2k}-2\right)\\
&\geq2\int_0^{\frac1{2\lambda}-1}\left(\sqrt{\frac1\lambda-2s}-2\right)ds\\
&\geq\frac3{2\lambda^{\frac32}}-\frac2\lambda+2.
\end{align*}
Therefore, we get that
\begin{align*}
\|M_{\Sh_\infty}\delta_o\|_{1,\infty}&=\sup_{\lambda>0}\lambda|\{(j,k):|M_{\Sh_\infty}\delta_o\|_{1,\infty}\delta_o(j,k)|>\lambda\}|\\
&\geq\sup_{\lambda\in(0,1)}\left(\frac{3}{2\lambda^{\frac12}}-2+2\lambda\right)=\infty.
\end{align*}
This proves (v). Using (v) and \eqref{eq:weak11}, we get (ii) and (iii). Also by Proposition \ref{diracdeltas}, we get (iv). 
\end{proof}

\subsection{The steplike dyadic tree: $\Upsilon_2^\infty$}

Let us consider the set of vertices
$$
V_{\Upsilon_2^\infty}=\bigcup_{n\in\mathbb N}\{(2^n,k):0\leq k\leq 2^n\}\cup\{(j,0):j\in\mathbb N, j\neq 2^n, n\in\mathbb N\}.
$$ 
Given two vertices $(j_1,k_1)$, $(j_2,k_2)$ in $V_{\Upsilon_2^\infty}$ we define them to be adjacent according to the following
$$
(j_1,k_1)\sim_{\Upsilon_2^\infty} (j_2,k_2) \Leftrightarrow \left\{
                                          \begin{array}{c}
                                            |j_1-j_2|=1,\,\textrm{and}\, k_1=k_2=0; \textrm{ or,} \\
                                            j_1=j_2,\,\textrm{and}\, |k_1-k_2|=1.
                                          \end{array}
                                        \right.
$$

\begin{center}
\begin{figure}[h]
\unitlength 0.4mm
\begin{picture}(168.13,166.25)(0,0)

\linethickness{0.15mm}
\put(1.25,9.38){\circle*{2.50}}

\linethickness{0.15mm}
\put(11.25,9.38){\circle*{2.50}}

\linethickness{0.15mm}
\put(20.63,9.38){\circle*{2.50}}

\linethickness{0.15mm}
\put(31.25,9.38){\circle*{2.50}}

\linethickness{0.15mm}
\put(41.25,9.38){\circle*{2.50}}

\linethickness{0.15mm}
\put(50.63,9.38){\circle*{2.50}}

\linethickness{0.15mm}
\put(60.63,9.38){\circle*{2.50}}

\linethickness{0.15mm}
\put(71.25,9.38){\circle*{2.50}}

\linethickness{0.15mm}
\put(81.25,9.38){\circle*{2.50}}

\linethickness{0.15mm}
\put(90.63,9.38){\circle*{2.50}}

\linethickness{0.15mm}
\put(100.63,9.38){\circle*{2.50}}

\linethickness{0.15mm}
\put(111.25,9.38){\circle*{2.50}}

\linethickness{0.15mm}
\put(121.25,9.38){\circle*{2.50}}

\linethickness{0.15mm}
\put(130.63,9.38){\circle*{2.50}}

\linethickness{0.15mm}
\put(140.63,9.38){\circle*{2.50}}

\linethickness{0.15mm}
\put(151.25,9.38){\circle*{2.50}}

\linethickness{0.15mm}
\put(11.25,19.38){\circle*{2.50}}

\linethickness{0.15mm}
\put(11.25,29.38){\circle*{2.50}}

\linethickness{0.15mm}
\put(31.25,19.38){\circle*{2.50}}

\linethickness{0.15mm}
\put(31.25,29.38){\circle*{2.50}}

\linethickness{0.15mm}
\put(31.25,39.38){\circle*{2.50}}

\linethickness{0.15mm}
\put(31.25,49.38){\circle*{2.50}}

\linethickness{0.15mm}
\put(71.25,19.38){\circle*{2.50}}

\linethickness{0.15mm}
\put(71.25,29.38){\circle*{2.50}}

\linethickness{0.15mm}
\put(71.25,40.00){\circle*{2.50}}

\linethickness{0.15mm}
\put(71.25,50.00){\circle*{2.50}}

\linethickness{0.15mm}
\put(71.25,59.38){\circle*{2.50}}

\linethickness{0.15mm}
\put(71.25,69.38){\circle*{2.50}}

\linethickness{0.15mm}
\put(71.25,80.00){\circle*{2.50}}

\linethickness{0.15mm}
\put(71.25,90.00){\circle*{2.50}}

\linethickness{0.15mm}
\put(151.25,19.38){\circle*{2.50}}

\linethickness{0.15mm}
\put(151.25,29.38){\circle*{2.50}}

\linethickness{0.15mm}
\put(151.25,40.00){\circle*{2.50}}

\linethickness{0.15mm}
\put(151.25,50.00){\circle*{2.50}}

\linethickness{0.15mm}
\put(151.25,59.37){\circle*{2.50}}

\linethickness{0.15mm}
\put(151.25,69.37){\circle*{2.50}}

\linethickness{0.15mm}
\put(151.25,80.00){\circle*{2.50}}

\linethickness{0.15mm}
\put(151.25,90.00){\circle*{2.50}}

\linethickness{0.15mm}
\put(151.25,99.38){\circle*{2.50}}

\linethickness{0.15mm}
\put(151.25,109.38){\circle*{2.50}}

\linethickness{0.15mm}
\put(151.25,120.00){\circle*{2.50}}

\linethickness{0.15mm}
\put(151.25,130.00){\circle*{2.50}}

\linethickness{0.15mm}
\put(151.25,140.00){\circle*{2.50}}

\linethickness{0.15mm}
\put(151.25,150.00){\circle*{2.50}}

\linethickness{0.15mm}
\put(151.25,160.00){\circle*{2.50}}

\linethickness{0.15mm}
\put(151.25,170.00){\circle*{2.50}}

\linethickness{0.15mm}
\put(1.25,9.38){\line(1,0){160.00}}
\put(161.25,9.38){\vector(1,0){0.12}}

\linethickness{0.15mm}
\put(151.25,8.75){\line(0,1){161.25}}

\linethickness{0.15mm}
\put(71.25,8.75){\line(0,1){81.25}}

\linethickness{0.15mm}
\put(31.25,8.75){\line(0,1){40.63}}

\linethickness{0.15mm}
\put(11.25,8.75){\line(0,1){20.00}}


\put(175,9.38){\makebox(0,0)[cc]{$\cdots$}}

\end{picture} \\
\caption{The graph $\Upsilon_2^\infty$.}
\end{figure}
\end{center}

\begin{proposition}\label{dyadictree}The following properties hold for the graph $\Upsilon_2^\infty$:
\begin{enumerate}[\rm (i)]
\item $\Delta_{\Upsilon_2^\infty}=3$.

\item $\mathcal O(\Upsilon_2^\infty)=\infty$.

\item $\mathcal D_2(\Upsilon_2^\infty)<\infty$. 

\item $\mathcal C(\Upsilon_2^\infty)<\infty$. 

\item $M_{\Upsilon_2^\infty}:L_1\rightarrow L_{1,\infty}$ is bounded.
\end{enumerate}
\end{proposition}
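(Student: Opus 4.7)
Assertions (i), (iv) and (v) reduce quickly to (iii), so the plan is to dispatch them first and then concentrate on (ii) and (iii). For (i), the degree distribution is read off by inspection: each tooth-base vertex $(2^n,0)$ has degree $3$ (its two base neighbours together with the tooth neighbour $(2^n,1)$), every non-tooth base vertex and every interior tooth vertex has degree $2$, and each tip $(2^n,2^n)$ has degree $1$; hence $\Delta_{\Upsilon_2^\infty}=3$. Once (iii) is established, Remark~\ref{doubECP} gives $\mathcal C(\Upsilon_2^\infty)\le\mathcal D_2(\Upsilon_2^\infty)<\infty$, which is (iv); while the inequality stated immediately after the definition of $\mathcal D_k$ yields $\mathcal D_3(\Upsilon_2^\infty)\le\mathcal D_2(\Upsilon_2^\infty)^{2}<\infty$, and then \eqref{eq:weak11} produces the weak-type bound in (v).

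For (ii) I mimic the argument used for $T_k$ in Proposition~\ref{trkdo}. Fix the base vertex $o=(1,0)$ and, for every $n\ge 1$, consider the ball $B_n:=B\bigl((2^n,2^n),\,2^{n+1}-1\bigr)$ centred at the tip $t_n=(2^n,2^n)$ of the $n$-th tooth. Any shortest path leaving a tooth must descend to the base first, so one computes $d(o,t_n)=(2^n-1)+2^n=2^{n+1}-1$ and, for $m\ne n$, $d(t_n,t_m)=2^n+|2^n-2^m|+2^m=2^{\max(m,n)+1}$. Thus $o\in B_n$ for every $n$, while $t_n\notin B_m$ whenever $m\ne n$ (the inequality $2^{\max(m,n)+1}\le 2^{m+1}-1$ fails in both the cases $m<n$ and $m\ge n$). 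Hence in the family $\{B_k\}_{k=1}^{N}$ every ball is indispensable (removing $B_n$ deletes $t_n$ from the union), and yet all $N$ balls contain $o$, so the overlap at $o$ is at least $N$ in any subfamily with the same union. Since $N$ is arbitrary, $\mathcal O(\Upsilon_2^\infty)=\infty$.

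The bulk of the argument is (iii). The key metric fact is that any shortest path from $v=(j_0,k_0)$ to a vertex not lying on the tooth through $j_0$ must pass through the base, so $B(v,r)$ is the union of the tooth segment $\{(j_0,k):|k-k_0|\le r,\,0\le k\le 2^{n_0}\}$ and the non-tooth-$j_0$ part of $B((j_0,0),r-k_0)$. For the base-centred case $v=(j_0,0)$ this gives
\[
|B((j_0,0),r)|=b(j_0,r)+T(j_0,r),
\]
where $b(j_0,r)\in[r+1,\,2r+1]$ counts reachable base vertices and $T(j_0,r)=\sum_{|2^n-j_0|\le r}\min\bigl(2^n,\,r-|2^n-j_0|\bigr)$ counts reachable non-base tooth vertices. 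I will bound $T$ by splitting at $2^n=r$: the terms with $2^n\le r$ sum to less than $2r$ (geometric series), while the terms with $2^n>r$ live in $(r,j_0+r]\cap[j_0-r,j_0+r]$, an interval whose endpoints have ratio less than $3$ and which therefore contains at most two powers of $2$, each contributing at most $r$, for a further $\le 2r$. Therefore $T(j_0,r)\le 4r$ and $r+1\le|B((j_0,0),r)|\le 7r+1$, so the doubling ratio $|B((j_0,0),2r)|/|B((j_0,0),r)|$ is bounded by an absolute constant.

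For tooth centres $v=(2^N,k_0)$ with $k_0\ge 1$ I split again according to whether $r<k_0$ (ball entirely contained in the tooth, doubling $\le 2$) or $r\ge k_0$ (decomposition above with effective base radius $R=r-k_0\ge 0$). In the latter the estimate reduces to the base case at $(2^N,0)$ together with an elementary bound on the tooth segment, producing a uniform doubling constant. The main obstacle I anticipate is securing a sharp lower bound on $|B(v,r)|$ in the intermediate regime $k_0\le r<2k_0$, where neither the tooth segment nor the base ball $B((2^N,0),R)$ alone is of size $\Omega(r)$; the remedy is to observe that the tooth segment already contributes $\min(r,2^N)+1$ vertices, which is $\Omega(r)$ whenever $r\le 2^N$, and that otherwise the base ball of radius $R\ge r-2^N$ absorbs the remaining mass, yielding the desired $\Omega(r)$ lower bound in every subcase.
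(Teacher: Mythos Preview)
Your proposal is correct and follows essentially the same strategy as the paper: for (iii) you establish the linear growth $|B(x,r)|\asymp r$ uniformly in $x$ (the paper proves $r\le|B(x,r)|\le 24r$ through a comparable case analysis), and this immediately delivers (iv) and (v), while for (ii) you exhibit a family of balls sharing a common point yet each containing a private vertex, exactly as the paper does. The only cosmetic differences are that the paper centres its balls at $(2^n,1)$ with radius $2^n$ (rather than at the tips $(2^n,2^n)$), and organises the ball-size estimate around the special radii $r_j=2^n\pm 2^j$ instead of your cleaner split at $2^n=r$ together with the ``at most two powers of $2$ in a ratio-$3$ interval'' count.
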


\begin{proof}
It is clear that $\Delta_{\Upsilon_2^\infty}=3$.

In order to see (ii), let us consider for $n\geq 1$ the ball $B_n$ with center $(2^n,1)$ and radius $2^n$. It is clear that $(1,0)\in\bigcap_{n\geq1} B_n$. Moreover, for each $m\geq1$ we have 
$$
(2^m,2^m)\in B_m\backslash\bigcup_{n\neq m} B_n.
$$
This shows that $\mathcal O(\Upsilon_2^\infty)=\infty$.

The rest of the proof will follow from the following:
\medskip

\noindent
{\sl Claim:} For every $x\in V_{\Upsilon_2^\infty}$ and $r\in\mathbb N$
$$
r\leq |B(x,r)|\leq 24 r.
$$
Once this is proved, it is clear that $\mathcal D_2(\Upsilon_2^\infty)\leq 48$, $\mathcal C(\Upsilon_2^\infty)\leq 24$ and by \eqref{eq:weak11} we get $\|M_{\Upsilon_2^\infty}\|_{1,\infty}\leq 72$. 
\medskip

The proof of the claim will be split into several steps:
\begin{enumerate}[{(a)}]
\item Suppose first $x=(2^n,0)$ for some $n\in\mathbb N$. 

\begin{enumerate}
\item Let us consider $r_j=2^n-2^j$, for some $0\leq j<n$. In this case, the ball $B(x,r_j)$ contains $2r_j+1$ points of the form $(m,0)$ and can only have points of the form $(2^i,k)$, with $k\neq0$, whenever $j+1\leq i\leq n$. Thus, we have that
\begin{equation}\label{1.1}
|B((2^n,0),r_j)|\leq 2r_j+\sum_{i=j+1}^n 2^i=2r_j+(2^{n+1}-2^{j+1})=4r_j.
\end{equation}
\item Consider now $r_j=2^j-2^n$, for some $j\geq n+1$. In this case, the ball $B(x,r_j)$ consists of all the points of the form $(m,0)$ for $1\leq m\leq 2^j$ and those of the form $(2^i,k)$, with $0<k\leq 2^i$ for $i<j$. Hence,
$$
|B((2^n,0),r_j)|\leq 2^j+\sum_{i=0}^{j-1} 2^i\leq 2^{j+1}.
$$
Note that for $j\geq n+1$, we have $2^{j-1}\leq 2^j-2^n$, so we get that
\begin{equation}\label{1.2}
|B((2^n,0),r_j)|\leq 4r_j.
\end{equation}
\item Let us consider now arbitrary $r\geq1$. For $1\leq r<2^{n-1}$, we have
$$
|B((2^n,0),r)|=3r+1.
$$
For $2^{n-1}\leq r<2^n$, there is some $0\leq j< n-1$ such that $2^n-2^{j+1}<r\leq2^n-2^j$. Since for $j<n-1$ we have $2^j<2^n-2^{j+1}$, and using \eqref{1.1}, we get
\begin{align*}
|B((2^n,0),r)|&\leq|B((2^n,0),2^n-2^j)|\leq 4(2^n-2^j)\\
&= 4(2^n-2^{j+1})+2^{j+2}\leq 8 (2^n-2^{j+1})\leq 8r.
\end{align*}
Finally, for $r\geq 2^n$, there is $j\geq n+1$ such that $2^j-2^n\leq r<2^{j+1}-2^n$. In particular, for such $j$ we have $2^j\leq2(2^j-2^n)$. Hence, using \eqref{1.2}, we have
\begin{align*}
|B((2^n,0),r)|&\leq|B((2^n,0),2^{j+1}-2^n)|\leq 4(2^{j+1}-2^n)\\
&=4(2^j+2^j-2^n)\leq 12(2^j-2^n)\leq 12 r. 
\end{align*}
Therefore, for any $r\geq1$ we have
$$
|B((2^n,0),r)|\leq12r.
$$
\end{enumerate}
\item Let $x=(2^n,k)$. For $r\leq k$ we clearly have $|B((2^n,k),r)|=2r+1$. While for $r>k$, all the vertices in $B((2^n,k),r)$, except at most $r$ vertices of the form $(2^n,m)$, belong to the ball $B((2^n,0),r)$. Thus, 
$$
|B((2^n,k),r)|\leq |B((2^n,0),r)|+r\leq 13 r.
$$
\item Let $x=(j,0)$ with $j\neq 2^n$ for any $n\in\mathbb N$. In particular, there is $m\in\mathbb N$ such that $2^m<j<2^{m+1}$, and we clearly have $B((j,0),r)\subset B((2^m,0),r)\cup B((2^{m+1},0),r)$. Hence,
$$
|B((j,0),r)|\leq |B((2^m,0),r)|+|B((2^{m+1},0),r)|\leq 24 r.
$$
\end{enumerate}
\end{proof}

We summarize in Table~\ref{table1} the different results we have previously obtained. To read this list, we indicate at a given entry whether the statement on the left column implies the corresponding one on the first row.

\begin{table}[htb]
$$\left.\begin{array}
{@{\hskip-2pt}c|@{\hskip-3pt}c@{\hskip-3pt}|@{\hskip-3pt}c@{\hskip-3pt}|@{\hskip-3pt}c@{\hskip-3pt}|@{\hskip-3pt}c@{\hskip-3pt}|@{\hskip-3pt}c@{\hskip-4pt}}
\Longrightarrow & {\mathcal D_k(G)<\infty} &{\mathcal O(G)<\infty}& {\it ECP} & \Delta_G<\infty & \text{\ \  Weak-type} \\
\hline {\mathcal D_k(G)<\infty} & - & \left.\begin{array}{c}\text{No} \cr\text{Prop.~\ref{dyadictree}}\end{array}\right. & \left.\begin{array}{c}\text{Yes} \cr\text{Rem.~\ref{doubECP}}\end{array}\right. &  \left.\begin{array}{c}\text{No} \cr\text{Prop.~\ref{onodel}}\end{array}\right.  &  \left.\begin{array}{c}\text{Yes} \cr\text{\eqref{eq:weak11}}\end{array}\right.  \\
\hline { \mathcal O(G)<\infty} & \left.\begin{array}{c}\text{No} \cr\text{Prop.~\ref{odclk}}\end{array}\right.  & - &  \left.\begin{array}{c}\text{No} \cr\text{Prop.~\ref{odclk}}\end{array}\right.  &  \left.\begin{array}{c}\text{No} \cr\text{Prop.~\ref{onodel}}\end{array}\right. &  \left.\begin{array}{c}\text{Yes} \cr\text{\eqref{eq:weak11}}\end{array}\right. \\
\hline {\it ECP} & \left.\begin{array}{c}\text{No} \cr\text{Prop.~\ref{trkdo}}\end{array}\right.  &  \left.\begin{array}{c}\text{No} \cr\text{Prop.~\ref{trkdo}}\end{array}\right.  & - & \left.\begin{array}{c}\text{No} \cr\text{Prop.~\ref{onodel}}\end{array}\right.  & \left.\begin{array}{c}\text{True on deltas}\cr\text{ Prop.~\ref{diracdeltas}}\end{array}\right.  \\
\hline \Delta_G<\infty & \left.\begin{array}{c}\text{No} \cr\text{Prop.~\ref{trkdo}}\end{array}\right. & \left.\begin{array}{c}\text{No} \cr\text{Prop.~\ref{trkdo}}\end{array}\right.  &  \left.\begin{array}{c}\text{No} \cr\text{Prop.~\ref{peineks}}\end{array}\right.  & - & \left.\begin{array}{c}\text{No} \cr\text{Prop.~\ref{peineks}}\end{array}\right.   \\
\hline \text{Weak-type} & \left.\begin{array}{c}\text{No} \cr \hbox{\cite{NaorTao}}\text{ \&  Prop.~\ref{trkdo}}\end{array}\right. &  \left.\begin{array}{c}\text{No} \cr \hbox{\cite{NaorTao}}\text{ \&   Prop.~\ref{trkdo}}\end{array}\right.  &  \left.\begin{array}{c}\text{No} \cr  \text{ Prop.~\ref{odclk}}\end{array}\right.  &  \left.\begin{array}{c}\text{No} \cr \text{ Prop.~\ref{onodel}}\end{array}\right.  & -\\
\hline
\end{array}\right.
$$
\medskip

\caption{All different combinations between the main parameters and properties.}
\label{table1}
\end{table}

\bigbreak

\section{Expander type bounds for the spherical maximal function}\label{expander}

In this section we will consider the spherical maximal function
$$
M^\circ_G f(x) = \sup_{r \in\mathbb N} \frac{1}{|S(x,r)|} \sum_{y \in S(x,r)} |f(y)|,
$$
where  $S(x,r)$ denotes the sphere
$$
S(x,r) = \{ y \in G: d(x,y) = r \}.
$$
Note that if the graph $G$ is infinite, then $S(x,r)$ is never empty. 

The spherical maximal function has been considered in a different discrete setting ($\mathbb Z^d$, with euclidean metric) in \cite{MSW}. See also \cite{Ionescu} for the corresponding endpoint estimates.

Since every ball can be written as the disjoint union of spheres, we have the point-wise estimate
$$
M_Gf(x) \leq M^\circ_G f(x).
$$
Therefore, a weak-type $(1,1)$-estimates for $M^\circ_G$ implies a similar estimate for $M_G$.

We will need to control the size of the spheres of a graph, so for $r\in \N$ let
$$
S_G(r)=\sup_{x\in V_G}|S(x,r)|.
$$
In all what follows we impose that the graph satisfies $S_G(r)<\infty$ for every $r\in\mathbb N$. Notice that we can compute the degree of a vertex as $d_x=|S(x,1)|$, therefore $\Delta_G=\max_{x\in V_G} d_x=S_G(1)$. In particular, the following requires $\Delta_G<\infty$.

Motivated by the ideas in \cite{NaorTao}, for a graph $G$, we introduce the function 
$$
\mathcal E_G(r)=\sup_{\substack{A,B\subset V_G,\\ \,|A|,|B|<\infty}}\frac{1}{|A||B|}\left(\sum_{x\in B}\frac{|A\cap S(x,r)|}{|S(x,r)|}\right)^2.
$$

This function is related to the expander properties of $G$. Indeed, given finite subsets $A,B\subset V_G$, let us consider 
$$
E(A,B)=\left|\left\{(x,y)\in A\times B: d(x,y)=1\right\}\right|.
$$
This quantity can be thought of as a measure for the size of common boundary between $A$ and $B$. If $G$ were a $k$-regular expander (see \cite[Section 9.2]{AS} for details), then the number $E(A,B)$ would behave approximately as what one would expect in a random graph. Moreover, for $k$-regular $G$ we always have
$$
\mathcal E_G(1)=\frac{1}{k^2}\sup_{\substack{A,B\subset V_G,\\ \,|A|,|B|<\infty}}\frac{E(A,B)^2}{|A||B|}.
$$
In particular, for the $k$-regular infinite tree $T_k$ it is shown in \cite[Lemma 5.1]{NaorTao} that $\mathcal E_{T_k}(r)\sim 1/k^r$.

Note also that $\mathcal E_G(r)$ provides a lower estimate for the size of the spheres on the graph. Indeed, for every $x\in V_G$, if we consider $A=\{x\}$ and $B=S(x,r)$, we get
$$
|S(x,r)|\geq \mathcal E_G(r)^{-1}.
$$

\begin{theorem}\label{thm:weak11}
Let $G$ be an infinite graph such that $S_G(r)<\infty$ for every $r\in \N$.
$$
\|M^\circ_G\|_{L_1(G)\rightarrow L_{1,\infty}(G)}\lesssim\sup_{n\in\N\cup\{0\}}2^{\frac{n}{2}}\sum_{\substack{r\in \N\cup \{0\}\\ S_G(r) \geq 2^{n-1}}}\mathcal E_G(r)S_G(r)^{\frac12}.
$$
\end{theorem}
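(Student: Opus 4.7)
The plan is to adapt the linearization + $L^2$-expander argument of Naor--Tao \cite{NaorTao} to the general graph setting, using the quantity $\mathcal E_G(r)$ as a substitute for their tree-specific estimate $\mathcal E_{T_k}(r) \sim 1/k^r$. First I would linearize: for each $x \in V_G$ select (measurably) a radius $r(x) \in \N$ at which the spherical average at scale $r(x)$ is within a factor $\tfrac12$ of $M^\circ_G f(x)$, so that it suffices to bound the linear operator $Tf(x) = |S(x,r(x))|^{-1}\sum_{y \in S(x,r(x))}f(y)$ uniformly in the choice of $r(\cdot)$. By a layer-cake decomposition together with the positive homogeneity and sublinearity of $M^\circ_G$, reduce further to the restricted weak-type estimate $\lambda|\{T\chi_A > \lambda\}| \lesssim B|A|$ for $A \subset V_G$ finite, where $B$ denotes the right-hand side of the theorem.

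Next I would dyadically decompose the level set $\{T\chi_A > \lambda\}$ according to the size of the chosen sphere:
$$E_\lambda^n = \{x : T\chi_A(x) > \lambda,\ 2^{n-1} \le |S(x,r(x))| < 2^n\},\qquad B_n^r = \{x \in E_\lambda^n : r(x) = r\}.$$
Only radii with $S_G(r) \ge 2^{n-1}$ contribute, which is precisely the restriction appearing in the sum. The key per-scale estimate follows directly from the definition of $\mathcal E_G$: since each $x \in B_n^r$ satisfies $|A \cap S(x,r)|/|S(x,r)| > \lambda$,
$$\lambda\,|B_n^r|\;\le\;\sum_{x \in B_n^r}\frac{|A \cap S(x,r)|}{|S(x,r)|}\;\le\;\sqrt{\mathcal E_G(r)\,|A|\,|B_n^r|}.$$

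Then I would sum in $r$ and apply Cauchy--Schwarz with weight $w_r = S_G(r)^{1/2}$: this brings out $\mathcal E_G(r)\,S_G(r)^{1/2}$ in one CS-factor (which equals $\Sigma_n := \sum_{r : S_G(r)\ge 2^{n-1}}\mathcal E_G(r)\,S_G(r)^{1/2}$), while the lower bound $S_G(r) \ge 2^{n-1}$ controls the dual sum by $\lesssim 2^{-n/2}|E_\lambda^n|$. Squaring and rearranging gives a per-scale estimate of the form $|E_\lambda^n| \lesssim |A|\,\Sigma_n\, 2^{-n/2}/\lambda^2$, and substituting $\Sigma_n \le B\cdot 2^{-n/2}$ turns the $n$-sum into a convergent geometric series. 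The main obstacle I anticipate is the last bookkeeping step: the $L^2$-Cauchy--Schwarz naturally produces an $L^{1}\to L^{2,\infty}$-type bound, and extracting the sharper $L^{1}\to L^{1,\infty}$-estimate claimed in the statement requires the $\sup_n$ pigeonhole to be paired correctly against the weighted Cauchy--Schwarz (so that the cross-terms $2^{n/2}\cdot 2^{-n/2}$ collapse into the $\sup_n$ rather than a lossier $\sum_n$) together with the quasi-triangle inequality for $L^{1,\infty}$ in the passage from indicators back to general $f \in L^1$.
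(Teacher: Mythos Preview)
Your Cauchy--Schwarz step is fine and indeed gives
\[
\lambda^2\,|E_\lambda^n|\ \lesssim\ |A|\,\Sigma_n\,2^{-n/2}\ \le\ |A|\,B\,2^{-n},
\]
so after summing in $n$ you obtain $\lambda^2\,|\{T\chi_A>\lambda\}|\lesssim B\,|A|$. But this is a restricted $L^1\to L^{2,\infty}$ bound, and neither of the two devices you invoke at the end can upgrade it. First, the reduction ``layer-cake $+$ sublinearity $\Rightarrow$ it suffices to test on indicators'' is false at the endpoint $p=1$: restricted weak-type $(1,1)$ does not imply weak-type $(1,1)$, and the quasi-triangle inequality for $L^{1,\infty}$ loses a logarithm over the number of pieces, which here is infinite. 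Second, even for a fixed indicator $A$ the $\lambda^{-2}$ in your bound is intrinsic to the Cauchy--Schwarz and cannot be repaired by the $\sup_n$ pigeonhole: the factors $2^{n/2}\cdot 2^{-n/2}$ have already been consumed to make the $n$-sum geometric, so there is nothing left to trade against the extra $\lambda^{-1}$.

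The paper (following Naor--Tao) avoids this by decomposing the \emph{input} function rather than the output level set. One writes $f\le \tfrac12+\sum_{n\ge0}2^n\1_{E_n}+f\1_{\{f\ge 2^{n(r)}\}}$ with $E_n=\{2^{n-1}\le f<2^n\}$ and $2^{n(r)}\sim S_G(r)$, and chooses the threshold $A^\circ_r\1_{E_n}\ge 2^{-n-4}(2^n/S_G(r))^{1/4}$ so that (i) the thresholds still sum to $<\tfrac12$, and (ii) applying the definition of $\mathcal E_G(r)$ at that threshold yields $|F_n|\lesssim 2^{3n/2}\mathcal E_G(r)S_G(r)^{1/2}|E_n|$. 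This produces a \emph{multi-level} distributional inequality
\[
|\{A^\circ_r f\ge\lambda\}|\ \lesssim\ \sum_{n:\,2^n\le 2S_G(r)}2^{3n/2}\mathcal E_G(r)S_G(r)^{1/2}\,|\{|f|\ge 2^{n-1}\lambda\}|,
\]
whose right side, after summing in $r$ and swapping the order of summation, is controlled by $B\cdot\sum_n 2^n|\{|f|\ge 2^{n-1}\lambda\}|\sim B\,\lambda^{-1}\|f\|_1$. The point is that the $\mathcal E_G$-estimate is applied to $\1_{E_n}$ at a height depending on both $n$ and $S_G(r)$; the resulting factor $2^{3n/2}$ splits as $2^n\cdot 2^{n/2}$, with $2^n$ building the layer-cake integral of $f$ and $2^{n/2}$ absorbed into the $\sup_n$. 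Your linearized indicator approach loses exactly this coupling.
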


For each $r \geq 0$, let $A^\circ_r$ denote the spherical averaging
operator
$$ A^\circ_r f(x) = \frac{1}{|S(x,r)|} \sum_{y \in S(x,r)} |f(y)|.$$
Thus $M^\circ_G f(x)=\sup_{r\ge0} A^\circ_r f(x)$. Following \cite{NaorTao}, let us start with a distributional estimate on $A^\circ_r$.

\begin{lemma}\label{lem:distribution}
Let $f \in L_1(G)$, $r > 0$ and $\lambda > 0$.  Then
$$
|\left\{A^\circ_r f \geq \lambda\right\}| \lesssim \sum_{\substack{n\in \N\cup\{0\}\\ 1 \leq 2^n \leq 2S_G(r)}}
2^{\frac{3n}{2}} \mathcal E_G(r)S_G(r)^{\frac12} |\left\{ |f| \geq 2^{n-1} \lambda
\right\}|.
$$
\end{lemma}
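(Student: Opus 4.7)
The plan is to combine a dyadic layer-cake decomposition of $f$ with the expander-type inequality encoded in $\mathcal{E}_G(r)$. For $n\geq 0$ set $F_n=\{y\in V_G: |f(y)|\geq 2^{n-1}\lambda\}$; splitting $|f(y)|=\int_0^{|f(y)|}dt$ at $\lambda/2$ and then dyadically on $[\lambda/2,\infty)$ gives the pointwise bound
$$|f(y)|\leq \frac{\lambda}{2}+\sum_{n\geq 0}2^{n-1}\lambda\,\chi_{F_n}(y).$$
Writing $\rho_n(x)=|S(x,r)\cap F_n|/|S(x,r)|$ and averaging over $S(x,r)$ yields $A_r^\circ f(x)\leq \lambda/2+\sum_{n\geq 0}2^{n-1}\lambda\,\rho_n(x)$, so $\{A_r^\circ f\geq \lambda\}$ is contained in $\{x:\sum_{n\geq 0}2^{n-1}\rho_n(x)\geq 1/2\}$.

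Next, I would let $N$ be the integer determined by $2^{N-1}\leq S_G(r)<2^N$, so the admissible range in the target sum is precisely $n\in\{0,1,\ldots,N\}$, and choose thresholds $\beta_n=c\cdot 2^{-3n/4}S_G(r)^{-1/4}$ for $n\leq N$, with $c>0$ small enough that $\sum_{n=0}^N 2^{n-1}\beta_n\leq 1/4$; this is feasible because $\sum_{n=0}^N 2^{n/4}\lesssim 2^{N/4}\lesssim S_G(r)^{1/4}$ cancels the factor $S_G(r)^{-1/4}$. For $n>N$ one has $2^{n-1}\geq 2^N>S_G(r)\geq |S(x,r)|$, so whenever $\rho_n(x)>0$ automatically $\rho_n(x)\geq 1/|S(x,r)|$ and hence $2^{n-1}\rho_n(x)>1$. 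Consequently, if $x\in\{A_r^\circ f\geq\lambda\}$ then either (i) some $n\leq N$ satisfies $\rho_n(x)\geq\beta_n$, or (ii) some $n>N$ satisfies $S(x,r)\cap F_n\neq\emptyset$; for otherwise the total sum $\sum_n 2^{n-1}\rho_n(x)$ would be strictly less than $1/4$, contradicting the previous paragraph.

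For case (i), applying the definition of $\mathcal{E}_G(r)$ to $A=F_n$ and $B=B_n:=\{\rho_n\geq\beta_n\}$ gives $|B_n|\beta_n\leq \sum_{x\in B_n}\rho_n(x)\leq \sqrt{\mathcal{E}_G(r)|F_n||B_n|}$, so $|B_n|\leq \mathcal{E}_G(r)|F_n|/\beta_n^2=c^{-2}\cdot 2^{3n/2}S_G(r)^{1/2}\mathcal{E}_G(r)|F_n|$, which summed over $n\leq N$ matches the target right-hand side. For case (ii), since the sequence $(F_n)_{n\geq 0}$ is decreasing, the exceptional set is contained in $\bigcup_{y\in F_{N+1}}S(y,r)$ and has cardinality at most $|F_{N+1}|S_G(r)\leq |F_N|S_G(r)$; the universal lower bound $\mathcal{E}_G(r)\geq 1/S_G(r)$ (from $A=S(x,r)$, $B=\{x\}$ in the supremum defining $\mathcal{E}_G$) combined with $2^N>S_G(r)$ yields $2^{3N/2}\mathcal{E}_G(r)\geq S_G(r)^{1/2}$, so this tail is absorbed into the $n=N$ term of the target sum. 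The main obstacle is the fine-tuning of $\beta_n$: the exponent $-3n/4$ is exactly what converts the $\beta_n^{-2}$ coming from Cauchy--Schwarz into the desired factor $2^{3n/2}$, while the factor $S_G(r)^{-1/4}$ is what keeps $\sum 2^{n-1}\beta_n$ bounded uniformly; the cut-off $2^n\leq 2S_G(r)$ in the statement is precisely the scale at which the dichotomy switches from case (i) to case (ii).
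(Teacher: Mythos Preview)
Your proof is correct and follows essentially the same strategy as the paper: dyadic decomposition of $f$, a pigeonhole argument with thresholds $\beta_n\sim 2^{-3n/4}S_G(r)^{-1/4}$, the Cauchy--Schwarz step encoded in the definition of $\mathcal{E}_G(r)$, and a separate treatment of the tail $n>N$. The cosmetic differences are that you use the nested level sets $F_n=\{|f|\ge 2^{n-1}\lambda\}$ while the paper uses the disjoint shells $E_n=\{2^{n-1}\le f<2^n\}$, and you handle the tail inside the dichotomy rather than splitting off $f\mathbf{1}_{\{f\ge 2^{n(r)}\}}$ beforehand; in fact your tail estimate via $\mathcal{E}_G(r)\ge 1/S_G(r)$ is the correct bound (the paper's line ``$\mathcal{E}_G(r)\ge 1$'' is inaccurate as stated---e.g.\ $\mathcal{E}_{T_k}(r)\sim k^{-r}$---but what is actually needed, and what you prove, is $2^{3N/2}\mathcal{E}_G(r)\gtrsim S_G(r)^{1/2}$).
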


\begin{proof}  We may take $f$ to be non-negative, and by dividing $f$ by $\lambda$ we may take $\lambda=1$.
For $n\in\mathbb N$, let us consider the set
\begin{equation*}
E_n = \left\{x\in V_G: 2^{n-1} \leq f(x) < 2^n\right\}.
\end{equation*}
Let $n(r)\in\N$ be such that $2^{n(r)}\leq S_G(r)<2^{n(r)+1}$.

Since we have
\begin{equation*}
f \leq \frac{1}{2} + \sum_{n=0}^{n(r)} 2^n \1_{E_n} + f \1_{\{f \geq 2^{n(r)}\}},
\end{equation*}
we can also bound,
\begin{equation*}
A^\circ_r f \leq \frac{1}{2} + \sum_{n=0}^{n(r)} 2^n A^\circ_r \left(\1_{E_n}\right) +A^\circ_r \left(f \1_{\{f \geq 2^{n(r)}\}}\right).
\end{equation*}

Note that
\begin{equation*}
\left\{x\in V_G:A^\circ_r \left(f \1_{\{f \geq 2^{n(r)}\}}\right) (x)\neq 0 \right\}\subset \bigcup_{f(y)\geq 2^{n(r)}}S(y,r),
\end{equation*}
so in particular we have
\begin{equation*}
\left|\left\{x\in V_G:A^\circ_r \left(f \1_{\{f \geq 2^{n(r)}\}}\right) (x)\neq 0 \right\}\right| \le S_G(r) \left|\left\{x\in V_G: f(x) \geq 2^{n(r)}\right\}\right|.
\end{equation*}

Thus, putting these together we have
\begin{align*}
\left|\left\{x\in V_G: A^\circ_r f(x) \geq 1 \right\}\right| \le &\left|\left\{x\in V_G:\sum_{n=0}^{n(r)} 2^n A^\circ_r \left(\1_{E_n}\right)(x) \geq\frac12\right\}\right|\\ 
&+ S_G(r) \left|\left\{x\in V_G: f(x) \geq 2^{n(r)} \right\}\right|.
\end{align*}

Note that if $x\in V_G$ satisfies
$$
\sum_{n=0}^{n(r)} 2^n A^\circ_r \left(\1_{E_n}\right)(x) \geq \frac{1}{2}
$$
then, for some $0\leq n\leq n(r)$, we necessarily have
$$
A^\circ_r \left(\1_{E_n}\right)(x) \geq \frac{1}{2^{n+4}}\left(\frac{2^n}{S_G(r)}\right)^{1/4}.
$$
Indeed, otherwise we would have
$$
\frac12\le  \sum_{n=0}^{n(r)} 2^n A^\circ_r\left( \1_{E_n}\right)(x)\le
\frac1{16}\sum_{n=0}^{n(r)}
\left(\frac{2^n}{S_G(r)}\right)^{1/4} \le
\frac{2^{1/4}S_G(r)^{1/4}-1}{16S_G(r)^{1/4}\left(2^{1/4}-1\right)}<\frac12,
$$
which is a contradiction.  

Now, for $0\leq n\leq n(r)$ let us consider the set
$$
F_n = \left\{x\in V_G: A^\circ_r \left(\1_{E_n}\right) (x)\geq \frac{1}{2^{n+4}}\left(\frac{2^n}{S_G(r)}\right)^{1/4} \right\},
$$
which is clearly finite and satisfies
\begin{align*}
\frac{|F_n|}{2^{n+4}}\left(\frac{2^n}{S_G(r)}\right)^{1/4}&\leq\sum_{y \in F_n} A^\circ_r \left(\1_{E_n}\right)(y)\le \sum_{y\in F_n}\frac{|E_n\cap S(y,r)|}{|S(y,r)|}\\
&\le 	\left(|E_n| |F_n|\mathcal E_G(r)\right)^{\frac12}.
\end{align*}
Hence, for $0\leq n\leq n(r)$ we have
$$
|F_n| \le 2^{\frac{3n}{2}+8} \mathcal E_G(r)S_G(r)^{\frac12} |E_n|.
$$

Finally, this estimate together with the fact that $S_G(r)^{1/2}\leq2^{\frac{3(n(r)+1)}{2}}$ and that $\mathcal E_G(r)\geq1$ (since $G$ is infinite) yield

\begin{align*}
\left| \left\{x\in V_G:A^\circ_r f(x) \geq 1 \right\}\right| &\leq \sum_{n=0}^{n(r)} |F_n| +S_G(r) \left|\left\{x\in V_G: f(x) \geq 2^{n(r)}\right\}\right|\\
& \lesssim \sum_{\substack{n\in \N\cup\{0\}\\ 1 \leq 2^n \leq 2S_G(r)}}2^{\frac{3n}{2}} \mathcal E_G(r)S_G(r)^{\frac12} \left|\left\{x\in V_G: f(x) \geq 2^{n-1}\right\}\right|.
\end{align*}
\end{proof}

\begin{proof}[Proof of Theorem~\ref{thm:weak11}]
Since $M^\circ_G f = \sup_{r \geq 0} A^\circ_r f$, Lemma~\ref{lem:distribution} implies that
\begin{align*}
\left| \left\{x\in V_G:M^\circ_G f (x)\geq \lambda\right\}\right| & \leq \sum_{r = 0}^\infty \left| \left\{ x\in V_G:A^\circ_r f (x)\geq \lambda \right\}\right|  \\
&\lesssim \sum_{r =0}^\infty \sum_{\substack{n\in \N\cup\{0\}\\ 1 \leq 2^n \leq 2S_G(r)}}\!\!\! 2^{\frac{3n}{2}} \mathcal E_G(r)S_G(r)^{\frac12} \left| \left\{ x\in V_G:|f|(x) \geq 2^{n-1}\lambda \right\}\right|\\
&=     \sum_{x \in T} \sum_{n =0}^\infty \sum_{\substack{r\in \N\cup \{0\}\\ S_G(r) \geq 2^{n-1}}}\mathcal E_G(r)S_G(r)^{\frac12} 2^{\frac{3n}{2}} \1_{\{|f(x)| \geq 2^{n-1}
\lambda\}}\\
&\lesssim \bigg(\sup_{n\in\N\cup\{0\}}2^{\frac{n}{2}}\!\!\! \sum_{\substack{r\in \N\cup \{0\}\\ S_G(r) \geq 2^{n-1}}}\!\!\mathcal E_G(r)S_G(r)^{\frac12}\bigg)\sum_{x \in T} \sum_{n =0}^\infty 2^n \1_{\{|f(x)| \geq 2^{n-1} \lambda\}} \\
&\lesssim \bigg(\sup_{n\in\N\cup\{0\}}2^{\frac{n}{2}}\sum_{\substack{r\in \N\cup \{0\}\\ S_G(r) \geq 2^{n-1}}}\mathcal E_G(r)S_G(r)^{\frac12}\bigg)\sum_{x \in T} \frac{1}{\lambda} |f(x)|,
\end{align*}
as desired.  The proof of Theorem \ref{thm:weak11} is complete. \end{proof}

\end{document}